\newcommand{\R}{\mathbb{R}}
\numberwithin{equation}{section}
\newtheorem{theorem}{Theorem}[section]
\newtheorem{lemma}[theorem]{Lemma}
\theoremstyle{definition}
\theoremstyle{remark}
\newcommand{\tu}{\bar{u}}
\newcommand{\ua}{{\bar{u}_a}}
\newcommand{\lambdaa}{{\lambda^{\! ax}_0}}
\newcommand{\lambdan}{\lambda_0^{\! 0}}
\newcommand{\td}{\textrm{d}}
\newcommand{\dx}{\,\textrm{d}x}
\newcommand{\N}{\mathbb{N}}
\DeclareMathOperator{\Ai}{Ai}
\newcommand\abs[1]{\lvert#1\rvert}
\newcommand\sbullet[1][.5]{\mathbin{\vcenter{\hbox{\scalebox{#1}{$\bullet$}}}}}
\newcommand*{\ddt}[1]{%
\accentset{\scriptsize\sbullet}{#1}}
\newcounter{tempcolnum}
\newcommand{\multicolinterrupt}[1]{
\setcounter{tempcolnum}{\col@number}
\end{multicols}
#1%
\begin{multicols}{\value{tempcolnum}}
}
\def\Xint#1{\mathchoice
 {\XXint\displaystyle\textstyle{#1}}{\XXint\textstyle\scriptstyle{#1}}%
 {\XXint\scriptstyle\scriptscriptstyle{#1}}{\XXint\scriptscriptstyle\scriptscriptstyle{#1}}\!\int}
\def\XXint#1#2#3{{\setbox0=\hbox{$#1{#2#3}{\int}$}\vcenter{\hbox{$#2#3$}}\kern-.5\wd0}}
 \def\dashint{\Xint-}
\definecolor{darkred}{rgb}{0.7,0.1,0.1}
\author{Ben Andrews} \address[Ben Andrews]{Mathematical Sciences
  Institute, Australian National University, ACT 2601 Australia; and
  Yau Mathematical Sciences Center, Tsinghua University, Beijing
  100084, China.}
\email{\href{mailto:Ben.Andrews@anu.edu.au}{\nolinkurl{Ben.Andrews@anu.edu.au}}}
\thanks{The research of the first author was supported by grants
  DP120102462 and FL150100126 of the Australian Research Council.
  The research of the second author was supported by grant FT1301013 of the Australian Research Council.
  }
\author{Julie Clutterbuck} 
\address[Julie Clutterbuck]{School of Mathematics, 
  Monash University,  VIC 3800  Australia}
\email{\href{mailto:Julie.Clutterbuck@monash.edu}{\nolinkurl{Julie.Clutterbuck@monash.edu}}}
\author{Daniel Hauer} \address[Daniel Hauer]{School of Mathematics and
  Statistics, The University of Sydney, NSW 2006, Australia}
\email{\href{mailto:daniel.hauer@sydney.edu.au}{\nolinkurl{daniel.hauer@sydney.edu.au}}}
\subjclass[2010]{		47A75, 	
34B09, 
34B15 
34L15 
34L40 
				}
\keywords{Eigenvalue problem, Robin boundary condition, fundamental gap, p-Laplacian.}
\begin{document}

\spacing{1.08}

\begin{abstract}{For Schr\"odinger operators on an interval with
    either convex or symmetric single-well potentials, and Robin or
    Neumann boundary conditions, the gap between the two lowest
    eigenvalues is minimised when the potential is constant.  We also have
    results for the $p$-Laplacian.}
\end{abstract}

\title{The fundamental gap for a one-dimensional Schr\"odinger
  operator with Robin boundary conditions}

\maketitle

\section{Introduction}

In studying the eigenvalues of a differential operator, one important
quantity is the gap between the first and second eigenvalues, called
the \emph{fundamental gap}.  This is of both physical and mathematical
importance: in the context of the heat equation, it gives the rate of
collapse of any initial state to the ground state; computationally, it
can control the rate of convergence of a numerical scheme
\cite{saad2011numerical}.

The fundamental gap for the classical Schr\"odinger operator
$-\Delta + V$ has been extensively studied under \emph{Dirichlet}
boundary values.  In one dimension, lower bounds of this gap were
found under various assumptions on $V$
\cite{ashbaugh-benguria-89,MR1948113} until Lavine \cite{MR1185270}
found the sharp result: the gap for a convex potential is minimised by
the gap for a constant potential. The analogous result in higher
dimensions, on a convex domain, was resolved several years later
\cite{fundamental}. Smits considered the question of the lower bounds
on the fundamental gap under \emph{Robin} boundary conditions
\cite{smits1996spectral}, however there are very few results known in
this case.  Laugesen recently studied the Robin eigenvalues, and the gap, on
rectangles \cite{laugesen2019robin}.  The Robin
problem is much more sensitive to the boundary, and is thus more
difficult. For example, the method used in \cite{fundamental} to prove
sharp lower bounds on the Dirichlet fundamental gap uses the property
that the first Dirichlet eigenfunction is \emph{log-concave}: in
recent work, we have shown that the first Robin eigenfunction does not
always enjoy that property \cite{andrews2017non}.

Thus, one important motivation here is to find methods to derive sharp
lower bounds on the gap that do not rely on the
log-concavity of the first eigenfunction.  In one dimension, Lavine's
proof of the lower bound for the gap is such a method.

In this paper, we revisit Lavine's method and establish sharp lower
bounds of the fundamental gap for the classical (linear) Schr\"odinger
operator on a bounded interval under Robin boundary conditions.  We
not only deal with the case where the Robin parameter $\alpha$ is
positive, but also for $-\frac{1}{2}\le \alpha\le0$, thus also
including the Neumann case.  The same statements also hold for the
Dirichlet case (sometimes referred to as $\alpha=\infty$).  We also
extend some results to the nonlinear Schr\"odinger operator associated
with the $p$-Laplace operator.

We can extend the methods to Robin boundary conditions
largely because the boundary conditions generally appear in forms such
as $[u_1 u_0'-u_1' u_0]^{-1}_{1}$ and so they not only vanish in the
Neumann or Dirichlet cases, but also in the Robin case.

We now  introduce our notation and assumptions used through this
paper. Let $I$ denote the open interval $(-1,1)$ and $V$ a potential
function in $C(\bar{I})$. 
The eigenvalue problem for the classical Schr\"odinger operator
$-\frac{\td^{2}}{\dx^{2}}+V$ on $I$ is to find eigenpairs
$(u_i,\lambda^{\! V}_i)$ solving
\begin{equation}
  \label{main}
 - u''+ V u= \lambda^{\!\! V} u \quad \text{ on }I
 \end{equation}
 subject to either homogeneous \emph{Robin or Neumann boundary conditions}
 \begin{equation}
   \label{main boundary} 
   u'(\pm 1)=\mp \alpha u(\pm 1), 
 \end{equation}
or homogeneous \emph{Dirichlet boundary condition} 
\begin{equation} 
  \label{Dirichlet bc}
  u(\pm 1)=0.
\end{equation}
The boundary conditions~\eqref{main boundary} are called
  \emph{Neumann} if $\alpha=0$, and otherwise \emph{Robin} with
  \emph{Robin parameter} $\alpha\in \R$. Dividing~\eqref{main
    boundary} by $\alpha>0$ and sending $\alpha\to +\infty$, one
  recovers the \emph{Dirichlet} conditions~\eqref{Dirichlet bc}.

  For $1<p<+\infty$, we also consider the Robin eigenvalue problem for
  the non\-linear Schr\"odinger operator $-\Delta_{p}+V\abs{\cdot}^{p-2}\cdot$ associated
  with the $p$-Laplace operator
  $\Delta_{p}u:=\left(|u'|^{p-2} u'\right)'$. Here we seek to find
  eigenpairs $(u_i,\lambda^{\! V}_i)$ solving
\begin{equation} 
  \label{p laplace 2}
-\Delta_{p}u+ V|u|^{p-2}u=\lambda^{\!\! V}|u|^{p-2}u
\quad \text{ on $I$,}
\end{equation}
with Robin boundary conditions
\begin{equation}
  \label{p laplace bc}
  |u'|^{p-2}u'=\mp \alpha |u|^{p-2}u \text{ at $x=\pm 1$. } 
\end{equation}
The Dirichlet boundary condition is again \eqref{Dirichlet bc}.

For
$p=2$ equations~\eqref{p laplace 2} and~\eqref{p laplace bc} reduce to
the classical linear ones~\eqref{main} and~\eqref{main boundary}. For
given $V\in L^{1}(I)$, $\lambda\in \R$, and $\alpha\in \R$, we
call a function $u\in C(\bar{I})$ a \emph{solution} of
equation~\eqref{p laplace 2} satisfying~\eqref{p laplace bc} provided
$u\in W^{1,p}(I)$ and satisfies
\begin{displaymath}
  \int_{I}|u'|^{p-2}u'\xi'+ \big(V-\lambda\big) |u|^{p-2}u\,\xi\,\dx
  +\alpha \Big[(|u|^{p-2}u\,\xi)_{\vert x=1}+(|u|^{p-2}u\,\xi)_{\vert x=-1}\Big]=0
\end{displaymath}
for every $\xi \in C^{1}(\bar{I})$. Since for every solution $u$
of~\eqref{p laplace 2}, $\lambda^{\!\!
  V}-V\,|u|^{p-2}u\in L^{1}(I)$, one has that $|u'|^{p-2}u'\in
W^{1,1}(I)$ and since $W^{1,1}(I)$ is continuously embedded into
$C(\bar{I})$, one finds that $u'\in C(\bar{I})$. Thus, every solution $u$ of~\eqref{p
  laplace 2} has regularity $u\in C^1(\bar{I})$.

The
Ljusternik-Schnirelmann theory~\cite{MR0467421,le2006eigenvalue} ensures the existence
of a
sequence $(\lambda^{\!\!
  V}_{i},u_{i})_{i\ge 0}$  of eigenpairs $(\lambda^{\!\!
  V}_{i},u_{i})$ for the nonlinear Schr\"odinger operator
$-\Delta_{p}+V\abs{\cdot}^{p-2}\cdot$ with Robin boundary conditions~\eqref{p laplace bc}.  For every $i\in \N$,
\begin{equation}
  \label{eq:1}
  \lambda^{\!\!V}_{i}:=\inf_{W\in \Gamma_{i}}\max_{u\in W}\mathcal{R}[u,V,p,\alpha],
\end{equation}
where $\mathcal{R}$ is the \emph{Rayleigh
  quotient}
\begin{equation}
  \label{p-Rayleigh}
  \mathcal{R}[u,V,p,\alpha]=\frac{\displaystyle\int_I |u'|^p + V |u|^p \,\dx + 
    \alpha\left[ |u|^p(1)+|u|^p(-1)\right]}{\displaystyle \int_I |u|^p\,\dx}.
\end{equation}
In~\eqref{eq:1}, the maximum is attained among all $u\in W\subseteq \Gamma_i$, where   $\Gamma_i$ is a specific closed subset of 
  $W^{1,p}(I)$.  To obtain 
eigenpairs $(\lambda^{\!\!
 V}_{i},u_{i})$ for homogeneous Neumann
boundary conditions, one chooses $\alpha=0$, 
 while for homogeneous  
Dirichlet boundary conditions~\eqref{Dirichlet bc}, one
needs to replace the space $W^{1,p}(I)$ by $W^{1,p}_{0}(I)$.

Our primary object of interest in this paper is the fundamental gap 
\begin{displaymath}
  \Gamma_{\!p}(V):=\lambda^{\!\! V}_1-\lambda^{\!\! V}_0.
\end{displaymath}
We 
write $\Gamma_{\!p}(0)$  for the fundamental gap of
the zero potential $V\equiv 0$. A potential $V$ 
is called  \emph{single-well} if there is a point
$x_{0}\in I$ such that $V$ is non-increasing along $(-1,x_{0})$ and
non-decreasing along $(x_{0},1)$. A potential
$V$ on $I$ is called \emph{symmetric} if $V(-x)=V(x)$.

Our first result concerns the fundamental gap for such symmetric, single-well potentials.  
The analogous theorem with Dirichlet
boundary conditions, and  $p=2$, is due to Ashbaugh and
Benguria \cite{ashbaugh-benguria-89}.

\begin{theorem}\label{single well p}
  For $\alpha\in \R$, 
  consider the nonlinear eigenvalue problem~\eqref{p laplace 2} with Robin boundary
  conditions~\eqref{p laplace bc}. Then for every symmetric, single
  well potential $V$, the fundamental gap satisfies
  \begin{displaymath}
    \Gamma_{\!p}(V)\ge \Gamma_{\!p}(0)
  \end{displaymath}
with equality only when $V$ is constant.   
\end{theorem}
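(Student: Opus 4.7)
The argument extends the classical Ashbaugh--Benguria strategy for the Dirichlet problem to Robin boundary conditions and to the $p$-Laplacian, and I would organise it into three stages.

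\emph{Symmetry and reduction to $[0,1]$.} Using $V(-x)=V(x)$, the ground state $u_0$ may be chosen even and the second eigenfunction $u_1$ odd, so that $u_0'(0)=0$, $u_1(0)=0$, and after a sign choice $u_0>0$ on $[-1,1]$ while $u_1>0$ on $(0,1]$. For $p=2$ this is classical spectral theory; for $p\ne 2$ one uses invariance of the even/odd subspaces of $W^{1,p}(I)$ under the Rayleigh quotient~\eqref{p-Rayleigh}, together with simplicity of $\lambda_0^V$ via a Picone-type identity. Since adding a constant to $V$ leaves $\Gamma_p(V)$ unchanged, assume $V(0)=0$, so that $V\ge 0$ and $V$ is non-decreasing on $[0,1]$. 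On $[0,1]$, $\lambda_0^V$ is now the principal eigenvalue with Neumann at $0$ and Robin at $1$, and $\lambda_1^V$ is the principal eigenvalue with Dirichlet at $0$ and Robin at $1$.

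\emph{Monotonicity of the ratio and the sign of $|u_1|^p-|u_0|^p$.} Set $w:=u_1/u_0$ on $[0,1]$, so $w(0)=0$ and $w$ extends smoothly to $[0,1]$. The key structural claim is that $w$ is strictly increasing on $[0,1)$. For $p=2$ it follows from the Wronskian identity $(u_0^2 w')'=-(\lambda_1^V-\lambda_0^V)\,u_0 u_1\le 0$ together with $u_0^2 w'|_{x=1}=[u_1' u_0-u_1 u_0']_{x=1}=0$ --- exactly the Robin cancellation flagged in the introduction --- so that integrating backwards yields $w'>0$ on $[0,1)$. For $p\ne 2$ the analogue uses the Picone identity for the $p$-Laplacian. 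Normalising $\int_0^1|u_i|^p=1$ and writing $|u_1|^p-|u_0|^p=|u_0|^p(w^p-1)$, the monotonicity of $w$ with $w(0)=0$ then forces $|u_1|^p-|u_0|^p$ to have a unique zero at some $x^*\in(0,1)$, be $\le 0$ on $[0,x^*]$, and $\ge 0$ on $[x^*,1]$.

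\emph{Variational derivative and conclusion.} Interpolate $V_s:=sV$ for $s\in[0,1]$ and set $\Gamma(s):=\lambda_1^{V_s}-\lambda_0^{V_s}$, with $L^p$-normalised eigenfunctions $u_i^s$. Feynman--Hellmann-type differentiation (legitimate since the Rayleigh quotient depends linearly on $V_s$ and $u_i^s$ is a critical point) gives
\begin{equation*}
  \Gamma'(s)=\int_{0}^{1} V(x)\bigl(|u_1^s(x)|^p-|u_0^s(x)|^p\bigr)\,dx.
\end{equation*}
Since $\int_0^1(|u_1^s|^p-|u_0^s|^p)=0$, this equals $\int_0^1 \bigl(V(x)-V(x^*_s)\bigr)\bigl(|u_1^s|^p-|u_0^s|^p\bigr)dx$, and by the sign structure above combined with $V$ non-decreasing, both factors share the same sign pointwise, so $\Gamma'(s)\ge 0$. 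Integrating from $0$ to $1$ yields $\Gamma_p(V)\ge\Gamma_p(0)$. Equality would force $V\equiv V(x^*_s)$ wherever $|u_1^s|^p-|u_0^s|^p$ is non-zero, which is everywhere except the isolated point $x^*_s$, so $V$ must be constant. The main obstacle is the monotonicity of $w=u_1/u_0$ for $p\ne 2$, which requires a careful Picone-type argument; a secondary concern is verifying that $s\mapsto\lambda_1(s)$ is smooth along the antisymmetric branch, which is not automatic in the nonlinear setting.
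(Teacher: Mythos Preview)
Your proposal is correct and follows essentially the same route as the paper: interpolate via $V_s=sV$, apply a Hellmann--Feynman formula to get $\Gamma'(s)=\int V(|u_1^s|^p-|u_0^s|^p)$, use symmetry and monotonicity of $u_1/u_0$ to pin down the sign pattern of $|u_1|^p-|u_0|^p$, and finish with the Chebyshev-type rearrangement against the single-well potential. The paper works on the full interval and locates two symmetric zeros $\xi_-=-\xi_+$ rather than reducing to $[0,1]$, but this is only cosmetic.

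The one point worth flagging is exactly the obstacle you identify: monotonicity of $u_1/u_0$ for $p\neq 2$. The paper does \emph{not} use a Picone identity here; instead it passes to the Riccati variable $v_i=|u_i'|^{p-2}u_i'\big/|u_i|^{p-2}u_i$, which satisfies $v_i'=(V-\lambda_i)-(p-1)|v_i|^{p/(p-1)}$ with $v_i(\pm 1)=\mp\alpha$. Since $v_0(-1)=v_1(-1)$ but $v_1'(-1)<v_0'(-1)$ (because $\lambda_1>\lambda_0$), a standard ODE comparison gives $v_1<v_0$ up to the interior zero of $u_1$, and hence $(u_1/u_0)'<0$ there. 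This handles all $p>1$ in one stroke and replaces your Wronskian/Picone step.
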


Our next theorem shows that the fundamental gap $\Gamma_{\!p}(V)$ with
convex potential $V$ is minimised by a linear potential.

\begin{theorem} \label{linear better than convex p} For $\alpha\in \R$,
    consider the nonlinear eigenvalue problem
  \eqref{p laplace 2} with Robin boundary conditions \eqref{p laplace
    bc}. Then for every convex potential $V$ which is
  not affine, there exists a linear potential $V_{a}=ax$, 
  such that 
\begin{displaymath}
  \Gamma_{\!p}(V)>\Gamma_{\!p}(ax).
\end{displaymath} 
\end{theorem}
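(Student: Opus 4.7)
Given a convex $V\in C(\bar I)$ which is not affine, my first step is to single out the natural comparison line. Set $a:=\tfrac{1}{2}(V(1)-V(-1))$, $b:=\tfrac{1}{2}(V(1)+V(-1))$, and $L(x):=ax+b$, so that $L$ interpolates $V$ at $\pm 1$. Convexity of $V$ forces $W:=V-L$ to be convex, to vanish at $\pm 1$, and therefore to satisfy $W\le 0$ on $\bar I$; moreover $W\not\equiv 0$ since $V$ is not affine. Because adding a constant to the potential shifts both eigenvalues by the same amount, one has $\Gamma_{\!p}(L)=\Gamma_{\!p}(ax)$, so it suffices to prove the strict inequality $\Gamma_{\!p}(V)>\Gamma_{\!p}(L)$.

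The plan is to interpolate through convex potentials $V_t:=L+tW$, $t\in[0,1]$, with $V_0=L$ and $V_1=V$, and to establish monotonicity of the gap along this family. Writing $(\lambda_i^{V_t},u_i^t)$, $i=0,1$, for the first two eigenpairs of~\eqref{p laplace 2}--\eqref{p laplace bc}, the variational formula behind~\eqref{eq:1} gives
\begin{equation*}
  \frac{d}{dt}\Gamma_{\!p}(V_t)
  = \int_I W(x)\,\rho_t(x)\,\dx,
  \qquad
  \rho_t(x) := \frac{|u_1^t(x)|^p}{\lnorm{u_1^t}_p^p}
  - \frac{|u_0^t(x)|^p}{\lnorm{u_0^t}_p^p}.
\end{equation*}
Introduce the auxiliary function $v_t$ solving $-v_t''=\rho_t$ on $I$ with $v_t(\pm 1)=0$. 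Because $W''\ge 0$ in the sense of Radon measures (convexity of $W$), and because both $W$ and $v_t$ vanish at $\pm 1$, two integrations by parts yield
\begin{equation*}
  \int_I W(x)\,\rho_t(x)\,\dx
  = -\int_I v_t(x)\,W''(x)\,\dx,
\end{equation*}
understood in the distributional sense. Thus it is enough to establish the concentration inequality $v_t\le 0$ on $I$.

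That concentration inequality is equivalent to
\begin{equation*}
  \frac{\int_I f\,|u_1^t|^p\,\dx}{\lnorm{u_1^t}_p^p}
  \le
  \frac{\int_I f\,|u_0^t|^p\,\dx}{\lnorm{u_0^t}_p^p}
  \quad\text{for every nonnegative concave $f\in C(\bar I)$ with $f(\pm 1)=0$,}
\end{equation*}
which encodes the expectation that the positive ground state $u_0^t$ is concentrated more deeply in the interior of $I$ than the first excited state $u_1^t$ (which has exactly one interior node). Proving this is the main obstacle. I would attack it by a Sturm--Pr\"ufer comparison between $u_0^t$ and $u_1^t$ that exploits the convexity of $V_t$ in the spirit of Lavine's original argument; the crucial point that allows such a comparison to run uniformly across Neumann, Robin, and Dirichlet conditions---as highlighted in the introduction---is the vanishing of the Wronskian-type boundary expression $[u_1^t (u_0^t)'-(u_1^t)' u_0^t]^{1}_{-1}$ under~\eqref{p laplace bc}, together with its natural $p$-Laplacian analogue. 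Once $v_t\le 0$ is established, $\frac{d}{dt}\Gamma_{\!p}(V_t)\ge 0$ throughout $[0,1]$ with strict positivity on a set of positive $t$-measure (because $W\not\equiv 0$ and $\rho_t\not\equiv 0$), so that $\Gamma_{\!p}(V)=\Gamma_{\!p}(V_1)>\Gamma_{\!p}(V_0)=\Gamma_{\!p}(ax)$, proving the theorem.
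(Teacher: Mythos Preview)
There is a genuine gap: the concentration inequality $v_t\le 0$ is the whole content of your argument, and you do not prove it. It does \emph{not} follow from the sign structure of $\rho_t$ established in Lemma~\ref{lemma 3.1 p} together with $\int_I\rho_t=0$: one can write down functions $\rho$ with the pattern $+,-,+$ and zero mean for which the Dirichlet solution of $-v''=\rho$ is positive somewhere (push $\xi_-$ close to $-1$, keep $\xi_+$ near $0$, and evaluate $v$ well inside the large right-hand positive lobe). So if $v_t\le 0$ really holds for eigenfunction differences, it requires input beyond the nodal picture, and your Sturm--Pr\"ufer sketch supplies none. The vanishing of the Wronskian-type boundary term you invoke only yields the monotonicity of $u_1/u_0$ (Lemma~\ref{monotony p}) and hence the sign pattern of $\rho_t$; it does not give the stronger pointwise inequality you need.

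The paper sidesteps this by choosing a \emph{different} comparison line. Rather than your endpoint secant, it takes $L_V$ to be the chord of $V$ through the interior zeros $\xi_-,\xi_+$ of $|u_1|^p-|u_0|^p$ furnished by Lemma~\ref{lemma 3.1 p}. Convexity of $V$ then forces $V-L_V\ge 0$ on $(-1,\xi_-)\cup(\xi_+,1)$ and $V-L_V\le 0$ on $(\xi_-,\xi_+)$, exactly the sign pattern of $|u_1|^p-|u_0|^p$. Hence the integrand in~\eqref{gap dot} is \emph{pointwise} nonnegative, strictly positive on a set of positive measure since $V$ is not affine, and $\tfrac{d}{dt}\Gamma_{\!p}(V^t)>0$ for $V^t=tV+(1-t)L_V$ follows with no auxiliary ODE and no concentration estimate. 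This sign-matching choice of chord \emph{is} Lavine's device; your integration-by-parts route discards it and trades it for a harder, unproved claim.
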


The last two theorems relate to the classical, $p=2$, case only.

\begin{theorem} 
\label{constant vs linear}   
Consider the linear eigenvalue problem~\eqref{main} with Robin
boundary conditions \eqref{p laplace bc} with  $\alpha\ge -\frac{1}{2}$, 
and let $V_a=ax$ be a linear
potential, $a\in \R$. Then the fundamental gap is bounded below by the gap for a zero potential, 
\begin{displaymath}
  \Gamma_{\!2}(ax)\ge \Gamma_{\!2}(0).
\end{displaymath} 
with equality only when $a=0$.
\end{theorem}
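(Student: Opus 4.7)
The plan is to proceed in three steps. First, I would use the reflection $x\mapsto -x$, which sends the Schr\"odinger problem $-u''+ax\,u=\lambda u$ with Robin condition $u'(\pm 1)=\mp\alpha u(\pm 1)$ to the same problem with potential $-ax$ and the same $\alpha$; hence each eigenvalue $\lambda_i(a)$ depends only on $|a|$, the map $a\mapsto\Gamma_{\!2}(ax)$ is even, and it suffices to prove $\Gamma_{\!2}(ax)>\Gamma_{\!2}(0)$ for $a>0$.

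Second, I would exploit the linearity of the potential: for $a>0$, every solution of $-u''+ax\,u=\lambda u$ is a linear combination $u(x)=A\,\Ai(s)+B\,\operatorname{Bi}(s)$ of Airy functions evaluated at $s=a^{1/3}(x-\lambda/a)$. Substituting into the Robin boundary conditions yields a $2\times 2$ linear system in $(A,B)$, whose determinant vanishing gives an explicit transcendental equation $F(\lambda,a,\alpha)=0$ defining the eigenvalues $\lambda_0(a)<\lambda_1(a)<\cdots$ as simple, real-analytic branches in $a$.

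Third, I would analyze $G(a):=\lambda_1(a)-\lambda_0(a)$. The Hellmann--Feynman theorem gives $\lambda_i'(a)=\int_{-1}^{1}x\,u_i^2\,\dx$ for $L^2$-normalized eigenfunctions, so $G'(a)=\int_{-1}^{1}x\,(u_1^2-u_0^2)\,\dx$. At $a=0$ the parities of $u_0^{(0)}$ (even) and $u_1^{(0)}$ (odd) force $G'(0)=0$. Applying second-order Rayleigh--Schr\"odinger perturbation together with the parity of the full family $\{u_k^{(0)}\}$ expresses $G''(0)$ as a signed combination of the matrix elements $|\langle u_i^{(0)},x\,u_k^{(0)}\rangle|^2/(\lambda_k^{(0)}-\lambda_i^{(0)})$ over opposite-parity pairs, with a dominant positive ``gap-band'' term $4|\langle u_0^{(0)},x\,u_1^{(0)}\rangle|^2/(\lambda_1^{(0)}-\lambda_0^{(0)})$. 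Since the $u_k^{(0)}$ are explicit trigonometric or hyperbolic functions with frequencies determined by $\omega\tan\omega=\alpha$ (symmetric band) or $\omega\cot\omega=-\alpha$ (antisymmetric band), these sums can be evaluated in closed form; the hypothesis $\alpha\ge -\tfrac{1}{2}$ is the precise threshold under which the signed combination remains strictly positive, giving $G''(0)>0$.

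The hard part will be promoting this infinitesimal estimate to the global inequality for every $a>0$. To this end I would establish strict monotonicity $G'(a)>0$ throughout $a>0$, equivalently the first-moment comparison $\int_{-1}^{1}x\,u_1^2\,\dx>\int_{-1}^{1}x\,u_0^2\,\dx$. Using the Airy representation of $u_0$ and $u_1$ this reduces to a sign statement for an expression built from $\Ai,\operatorname{Bi}$ and their derivatives at the endpoint parameters $s_\pm=a^{1/3}(\pm 1-\lambda_i(a)/a)$; standard Wronskian identities for Airy functions, combined with the restriction $\alpha\ge -\tfrac{1}{2}$ on the boundary data, should produce the required positivity. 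Together with $G(0)=\Gamma_{\!2}(0)$ this will give $\Gamma_{\!2}(ax)>\Gamma_{\!2}(0)$ for every $a\ne 0$, with equality only when $a=0$.
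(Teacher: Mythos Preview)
Your plan has a genuine gap at the decisive step. The first two stages---the parity reduction to $a>0$ and the Airy representation---are sound but largely cosmetic; the Airy functions never feed into a concrete argument. The Hellmann--Feynman formula $G'(a)=\int_{-1}^{1}x(u_1^2-u_0^2)\,\dx$ is also the paper's starting point. But then you diverge: you propose to prove $G'(a)>0$ for every $a>0$ directly, and you defer this entirely to ``standard Wronskian identities for Airy functions'' that ``should produce the required positivity.'' No such identity is exhibited, and there is reason for scepticism: the endpoint arguments $s_\pm=a^{1/3}(\pm1-\lambda_i(a)/a)$ depend on the eigenvalues $\lambda_i(a)$, which are themselves only defined implicitly by the transcendental Robin determinant. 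Extracting a uniform sign for $\int x(u_1^2-u_0^2)$ from such expressions, for all $a>0$ and all $\alpha\ge -\tfrac12$, is far from routine. Your perturbative calculation of $G''(0)$, even if the signed-sum analysis can be made rigorous (and the assertion that $\alpha\ge -\tfrac12$ is exactly the positivity threshold for the Rayleigh--Schr\"odinger series is unsubstantiated), yields only a local minimum at $a=0$; it neither implies nor assists the global monotonicity you need.

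The paper avoids this difficulty by not attempting to prove $G'(a)>0$ directly. Instead it first establishes coercivity ($\Gamma_{\!2}(ax)\to\infty$ as $|a|\to\infty$, using only leading Airy asymptotics), so a global minimiser exists, and then derives a contradiction \emph{at} any putative minimiser $a>0$. The critical-point condition $\int x(u_1^2-u_0^2)\,\dx=0$ is combined with two integral identities for solutions of $-u''+ax\,u=\lambda u$ (obtained by integrating $\tfrac{d}{dx}\bigl[(u')^2+(\lambda-ax)u^2\bigr]$ and its $x^2$-weighted analogue) to produce an equality whose left side is $(1+2\alpha)\bigl[u_1(1)^2-u_0(1)^2-u_1(-1)^2+u_0(-1)^2\bigr]$ and whose right side is $-5a\int_{-1}^{1} x^2(u_1^2-u_0^2)\,\dx$. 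Separate elementary lemmas show the bracketed boundary quantity is positive and the right side is strictly negative, giving the contradiction; the hypothesis $\alpha\ge -\tfrac12$ enters solely through the factor $1+2\alpha$. This route never requires evaluating or bounding Airy functions at implicitly defined arguments, which is precisely why it succeeds where your Step~3 stalls.
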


{Combining Theorem~\ref{linear better than convex p} with
Theorem~\ref{constant vs linear}, we immediately have:}

\begin{theorem} \label{convex potential} Consider the linear
  eigenvalue problem~\eqref{main} where $V$ is a \emph{convex
    potential}, and with Robin boundary conditions \eqref{main
    boundary} with $\alpha\ge -\frac{1}{2}$.  Then the fundamental gap
  satisfies
  \begin{displaymath}
  \Gamma_{\!2}(V)\ge \Gamma_{\!2}(0).
\end{displaymath} 
\end{theorem}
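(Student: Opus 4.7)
The plan is to proceed exactly as the remark preceding the statement suggests: chain Theorem~\ref{linear better than convex p} and Theorem~\ref{constant vs linear}, splitting into two cases depending on whether the convex potential $V$ is affine.

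First I would record the elementary shift invariance of the fundamental gap: if $V$ and $\widetilde V = V + c$ differ by a constant $c \in \R$, then for $p=2$ any eigenpair $(u,\lambda^{\! V})$ of~\eqref{main}--\eqref{main boundary} corresponds to an eigenpair $(u,\lambda^{\! V}+c)$ of the same problem with potential $\widetilde V$, so $\Gamma_{\!2}(V) = \Gamma_{\!2}(\widetilde V)$. In particular, for any affine potential $V(x) = ax + b$, we have $\Gamma_{\!2}(V) = \Gamma_{\!2}(ax)$.

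Now suppose $V$ is convex and $\alpha \ge -\tfrac{1}{2}$. If $V$ is affine, write $V = ax + b$; then by the shift invariance above, $\Gamma_{\!2}(V) = \Gamma_{\!2}(ax)$, and Theorem~\ref{constant vs linear} yields $\Gamma_{\!2}(ax) \ge \Gamma_{\!2}(0)$. If instead $V$ is convex but not affine, then Theorem~\ref{linear better than convex p} (applied with $p=2$) furnishes a linear potential $V_a = ax$ with $\Gamma_{\!2}(V) > \Gamma_{\!2}(ax)$, and Theorem~\ref{constant vs linear} again gives $\Gamma_{\!2}(ax) \ge \Gamma_{\!2}(0)$. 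Chaining the two inequalities produces $\Gamma_{\!2}(V) \ge \Gamma_{\!2}(0)$ in both cases.

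There is no genuine obstacle beyond the case split and the shift invariance; the only point that deserves a sentence of verification is that the restriction $\alpha \ge -\tfrac{1}{2}$ in Theorem~\ref{constant vs linear} is inherited by the combined statement (Theorem~\ref{linear better than convex p} imposes no sign restriction on $\alpha$), so the hypotheses carry through transparently. The argument also preserves strict inequality whenever $V$ is not constant, since either the first step (non-affine case) or the second step (non-zero slope $a \neq 0$) is strict.
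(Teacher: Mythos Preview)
Your proposal is correct and matches the paper's approach exactly: the paper does not give a separate proof of Theorem~\ref{convex potential}, but simply states that it follows immediately by combining Theorem~\ref{linear better than convex p} with Theorem~\ref{constant vs linear}. Your case split (affine versus non-affine) and the shift-invariance observation fill in precisely the routine details the paper omits.
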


In the linear case $p=2$ and under homogeneous Dirichlet and Neumann
boundary conditions, Theorems \ref{linear better than convex p},
\ref{constant vs linear} and \ref{convex potential} are due to Lavine
\cite{MR1185270}.

The structure of this paper is as follows. In Section \ref{technical
  results} we establish some necessary technical results, particularly
about the shape of the first two eigenfunctions.  In Section
\ref{single well section}, we prove Theorem \ref{single well p}, for
symmetric single-well potentials.  In Section \ref{section four}, we
prove Theorem \ref{linear better than convex p}.  In Section
\ref{linear vs convex section}, we prove some further technical
results to support the argument in Section \ref{section six}, where we
prove Theorem \ref{constant vs linear}: this proof is only for the
\emph{classical} Schr\"odinger operator, with $p=2$.

\section{Some {preliminary} results}\label{technical results}
We normalise
all eigenfunctions  so that $\int_{I} |u_i|^p\dx=1$ and
$u_{i}>0$ on $(-1,-1+\epsilon)$ for some small
$\epsilon>0$. If $(u_{0},\lambda_{0}^{\!\! V})$ is
the first eigenpair then $u_{0}$ minimises the Rayleigh
quotient~\eqref{p-Rayleigh}. Since also $|u_{0}|$ is a minimiser
of~\eqref{p-Rayleigh}, we can choose $u_{0}\ge 0$ on
$\overline{I}$. But as we assume  that the potential $V$
is bounded from below on $I$, and since $u_{0}$ is a solution of~\eqref{p
  laplace 2}, the strong maximum principle
(see~\cite[Theorem~5.3.1]{MR2356201}) implies that $u_{0}>0$ on $I$.

We begin with a Hellmann--Feynmann result for the variation of eigenvalues
with respect to a family of potentials. 

\begin{lemma} \label{varying potential} 
  Let $\lbrace V^{t} \rbrace_{t\in J}$ be a family of potentials
  $V^{t}\in L^{1}(I)$ varying differentiably in the parameter
  $t\in \R$. Then for every
  eigenpair $(u_{i},\lambda_{i}^{\!\! V})$ of the nonlinear eigenvalue problem
  \eqref{p laplace 2} with Robin boundary conditions \eqref{p laplace
    bc},
\begin{equation*}
  \label{lambda dot}
  \frac{\partial}{\partial t}\lambda_i^{\!\! V^{t}}=\int_{I} \ddt{V}^{t} |u_i|^p\,\dx     
\end{equation*}
where $\ddt{V}^{t}$ indicates the derivative
$\frac{\partial}{\partial t}V^{t}$ with respect to $t$. In particular,
for the fundamental gap,
\begin{equation} 
  \label{gap dot}
  \frac{\partial}{\partial t}\Gamma_{\!p}(V^{t})=\int_{I} \ddt{V}^{t}
  \left(|u_1|^p-|u_0|^p\right)\dx.   
\end{equation}
\end{lemma}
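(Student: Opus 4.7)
The plan is to combine the Rayleigh quotient expression for $\lambda_i^{\!\! V^t}$ with the weak form of the eigenvalue equation \eqref{p laplace 2}--\eqref{p laplace bc}, and use the normalisation $\int_I |u_i|^p\dx = 1$ to kill all the terms involving the derivative of the eigenfunction. Assuming that a normalised eigenfunction $u_i = u_i^t$ can be chosen to depend differentiably on $t$ in $W^{1,p}(I)$, the normalisation allows us to write
\begin{equation*}
  \lambda_i^{\!\! V^t} = \mathcal{R}[u_i^t, V^t, p, \alpha] = \int_I \bigl(|u_i'|^p + V^t |u_i|^p\bigr)\dx + \alpha\bigl[|u_i|^p(1) + |u_i|^p(-1)\bigr].
\end{equation*}
Differentiating this identity in $t$ produces the target contribution $\int_I \ddt{V}^t |u_i|^p\dx$ together with $p$ times the expression
\begin{equation*}
  J(\xi) := \int_I |u_i'|^{p-2} u_i' \xi' + V^t |u_i|^{p-2} u_i \xi\,\dx + \alpha\bigl[(|u_i|^{p-2}u_i \xi)(1) + (|u_i|^{p-2}u_i \xi)(-1)\bigr]
\end{equation*}
evaluated at the admissible test function $\xi = \ddt{u}_i \in W^{1,p}(I)$.

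The weak formulation of \eqref{p laplace 2}--\eqref{p laplace bc} tested against $\xi = \ddt{u}_i$ gives
\begin{equation*}
  J(\ddt{u}_i) = \lambda_i^{\!\! V^t} \int_I |u_i|^{p-2} u_i \,\ddt{u}_i\,\dx,
\end{equation*}
while differentiating the constraint $\int_I |u_i|^p\dx = 1$ in $t$ yields $\int_I |u_i|^{p-2} u_i\, \ddt{u}_i\dx = 0$. These two facts together annihilate the $p J(\ddt{u}_i)$ term, leaving precisely the claimed identity $\frac{\partial}{\partial t}\lambda_i^{\!\! V^t} = \int_I \ddt{V}^t |u_i|^p\dx$. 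The gap formula \eqref{gap dot} then follows by subtracting the $i=0$ and $i=1$ versions.

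The main obstacle is not the algebra above, which is a short calculation, but the justification of differentiability of $t \mapsto u_i^t$ in $W^{1,p}(I)$. For the first eigenvalue $\lambda_0^{\!\! V^t}$, which is simple and isolated, this follows from the implicit function theorem applied to the map $(u,\lambda,t) \mapsto \bigl(-\Delta_p u + (V^t - \lambda)|u|^{p-2}u,\,\int_I |u|^p\dx - 1\bigr)$ between suitable Banach spaces, together with the boundary condition. For the second Ljusternik--Schnirelmann eigenvalue, where simplicity is not automatic in the nonlinear setting $p \neq 2$, one should instead view the derivative as an application of the envelope theorem to the min--max characterization \eqref{eq:1}: since the $t$-dependence of $\mathcal{R}[u, V^t, p, \alpha]$ for fixed $u$ reduces, after normalisation, to the single term $\int_I \ddt{V}^t |u|^p\dx$, the same formula is recovered along any differentiable selection of eigenfunctions, which is all that is needed for the monotonicity-of-the-gap arguments in the sequel.
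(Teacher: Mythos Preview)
Your proof is correct and follows essentially the same approach as the paper: differentiate the Rayleigh quotient in $t$ and use the weak form of the eigenvalue equation to eliminate all terms involving $\ddt{u}_i$. The paper keeps the full quotient and differentiates via the quotient rule (so the cancellation occurs between the two pieces of the derivative rather than via the differentiated constraint $\int_I |u_i|^p=1$), and it does not discuss the smooth dependence of $u_i^t$ on $t$ that you flag; your extra remarks there are a welcome addition but are not part of the paper's argument.
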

\begin{proof}
  By using the Rayleigh quotient \eqref{p-Rayleigh} and the fact that
  $(u_{i},\lambda_{i}^{\!\! V})$ is an eigenpair of~\eqref{p laplace
    2} satisfying Robin boundary conditions \eqref{p laplace bc}, \allowdisplaybreaks
\begin{align*}
\frac{\partial}{\partial t}\lambda_i^{\! V^{t}}&= \frac{1}{\int_I |u_{i}|^p \dx} \Bigg[\int_I
                            p|u'_{i}|^{p-2}u'_{i} \ddt{u}'_{i} +
                                                 \ddt{V}^{t}|u_{i}|^p
                                                 + V^{t} p |u_{i}|^{p-2}u_{i} \ddt{u}_{i}\,\dx
                            \Bigg.\\
&\hspace{4cm}\Bigg. 
+ \alpha\Big[ p |u_{i}|^{p-2} u_{i} \ddt{u}_{i}(-1)+ p |u_{i}|^{p-2} u_{i} \ddt{u}_{i}(1)\Big]  \Bigg]\\
& \hspace{1cm} -\frac{1}{\left(\int_I |u_{i}|^p \dx\right)^2} \left[
\int_I  |u'_{i}|^p+ V^{t}|u_{i}|^p \dx\right.\\
&\hspace{4.5cm}\Bigg.+  \alpha\Big[|u_{i}|^p(-1)+|u_{i}|^p(1)\Big] \Bigg]
  \left[\int_I
  p|u_{i}|^{p-2}u_{i}\ddt{u}_{i} \dx 
  \right] \\
&= \frac{1}{\int_I |u_{i}|^p \dx} 
\left[ \lambda^{\!\! V^{t}}_{i}\!\int_I
  p|u_{i}|^{p-2}u_{i}\ddt{u}_{i} \dx +\int_{I} \ddt{V}^t|u_{i}|^p \dx
  \right]\\ 
&\hspace{5cm}  -\frac{\lambda^{\!\! V^{t}}_{i}}{\int_I |u_{i}|^p \dx} \int_I
  p|u_{i}|^{p-2}u_{i}\ddt{u}_{i}\dx\\
&=  \frac{1}{\int_I |u_{i}|^p \dx} \int_I   \ddt{V}^{t}|u_{i}|^p \dx. 
\end{align*}
\end{proof}

For a given $\lambda\in \R$, let  $u$ be a solution of the nonlinear eigenvalue problem~\eqref{p
  laplace 2}-\eqref{p laplace bc}. Let $v$ be defined by
\begin{equation}
  \label{eq:5}
  v=\frac{|u'|^{p-2}u'}{|u|^{p-2}u}\qquad\text{on $I$.}
\end{equation}
Then $v$ is a solution of the Riccati equation
\begin{equation}
  \label{q prime}
  v'=(V-\lambda)- (p-1)\, |v|^{\frac{p}{p-1}}
\end{equation}
on $I$, and by~\eqref{p laplace bc},
satisfies the inhomogeneous Dirichlet boundary conditions
\begin{equation}
  \label{eq:2}
  v(\pm 1)=\mp \alpha.
\end{equation}
For every $\lambda\in \R$, the function
\begin{equation*}
  \label{eq:6}
  f_{\lambda}(x,v):=(V-\lambda)-
  (p-1)\,|v|^{\frac{p}{p-1}}\qquad\text{for a.e. $x\in \bar{I}$ and
    all $v\in \R$,}
\end{equation*}
has a continuous partial derivative
$\frac{\partial}{\partial v}f_{\lambda}(x,v)=-p
|v|^{\frac{2-p}{p-1}}v$, uniformly for a.e. $x\in I$. Thus
Gronwall's lemma implies that for every $c\in \R$ and
$x_{0}\in \bar{I}$, there can be at most one bounded solution $v$ on
$\overline{I}$ of
~\eqref{q prime} 
satisfying the initial condition $v(x_{0})=c$. 

The first eigenfunction is strictly positive 
and so the corresponding solution $v_{0}$ of 
~\eqref{q
  prime} is bounded on $\bar{I}$ and hence  is
unique.
 By~\eqref{eq:5}, this means 
the first eigenpair $(u_{0},\lambda_0^{\!\! V})$ is
\emph{simple}, in the sense that 
any two solutions 
of~\eqref{p laplace 2}-\eqref{p laplace bc}
for the same eigenvalue  are linearly dependent.

Concerning the simplicity of the other eigenpairs,
a generalisation of \emph{Courant's nodal domain theorem} for the 
$p$-Laplace operator  with homogeneous Dirichlet
boundary conditions on a bounded smooth domain in $\R^{d}$ was
obtained by Dr\'{a}bek and Robinson~\cite{MR1900460}. 
A \emph{nodal domain} is defined as a maximal
connected open subset $\{
u(x)\neq
0\}$. 
In one dimension, a Sturm-Liouville theory for nonlinear
Schr\"odinger operators of the form $-\Delta_{p}+V\abs{\cdot}^{p-2}\cdot$ on the bounded
interval $(0,b)$ satisfying $u'(0)=0$ and Robin boundary conditions at the right
endpoint $x=b$ was elaborated by several authors 
(e.g. ~\cite{MR1120904} or~\cite[Theorem~5 \& subsequent
Corollary]{MR1605441}). The results and
techniques of the last two references imply that a nodal
domain theorem for $-\Delta_{p}+V\abs{\cdot}^{p-2}\cdot$ with Robin
boundary condition holds. We omit the details.

\begin{lemma}[Nodal domain theorem for Robin boundary conditions]
  \label{lem:nodal-domain}
  Let $V\in C(\bar{I})$ and $\alpha\in \R$. Then each
  eigenvalue  $\lambda_i^{\!\! V}$ of the Schr\"odinger
  operator $-\Delta_{p}+V\abs{\cdot}^{p-2}\cdot$ with Robin boundary conditions~\eqref{p
    laplace bc} is simple, satisfies
  \begin{displaymath}
  \lambda_0^{\!\! V}<\lambda_1^{\!\! V}< \lambda^{\!\! V}_2< \cdots \rightarrow \infty.
\end{displaymath}
Moreover, the corresponding eigenvector
$u_{i}$ has exactly $i+1$ nodal domains in $I$.
 \end{lemma}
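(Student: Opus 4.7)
My plan is to adapt the classical Sturm--Liouville theory to the $p$-Laplacian via a generalised Prüfer transformation, and then extract simplicity, monotonicity of the spectrum, and the nodal count from the behaviour of the angular variable.

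First I would introduce the generalised trigonometric functions adapted to $\Delta_p$. Set $\pi_p := 2\int_0^1 (1-s^p)^{-1/p}\,\td s$ and let $S_p$ be the $2\pi_p$-periodic solution of $(|S_p'|^{p-2}S_p')' + (p-1)|S_p|^{p-2}S_p = 0$ with $S_p(0)=0$, $S_p'(0)=1$. The identity $|S_p|^p + \tfrac{|S_p'|^p}{p-1} \equiv \tfrac{1}{p-1}$ allows me to represent any non-trivial solution $u$ of \eqref{p laplace 2} through continuous functions $\rho>0$ and $\theta$ by
\begin{displaymath}
  u(x) = \rho(x)\,S_p(\theta(x)),\qquad u'(x) = \rho(x)\,S_p'(\theta(x)).
\end{displaymath}
Substituting into \eqref{p laplace 2} reduces the problem to a first order system in $(\rho,\theta)$ in which the equation for $\theta$ decouples and takes the form $\theta'(x) = F(x,\theta;\lambda,V)$, where $F$ is strictly increasing in $\lambda$ wherever $S_p(\theta)\neq 0$.

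Second, I would exploit monotonicity. A Gronwall-style comparison applied to the $\theta$-equation shows that, for any prescribed initial value $\theta(-1)$, the map $\lambda \mapsto \theta(x;\lambda)$ is strictly increasing on $(-1,1]$ and satisfies $\theta(1;\lambda)-\theta(-1;\lambda)\to +\infty$ as $\lambda\to+\infty$ and $\to 0$ as $\lambda\to -\infty$. The Robin conditions \eqref{p laplace bc} translate, via the Prüfer substitution, into $S_p'(\theta(\pm 1)) = \mp\alpha\,S_p(\theta(\pm 1))$, which fixes the admissible value of $\theta(-1)$ in $[0,\pi_p)$ and imposes, modulo $\pi_p$, a unique admissible value of $\theta(1)$. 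Eigenvalues are then exactly the values $\lambda$ for which $\theta(1;\lambda)$ hits the admissible set, so by strict monotonicity they form a discrete, strictly increasing sequence $\lambda_0^{\!V}<\lambda_1^{\!V}<\cdots\to\infty$.

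Third, simplicity and the nodal count fall out of the transformation. Given an eigenvalue $\lambda_i^{\!V}$, the angular function $\theta$ is uniquely determined by the left Robin condition and the ODE, while $\rho$ is determined up to a single positive multiplicative constant from its own first order equation; hence any two solutions of \eqref{p laplace 2}--\eqref{p laplace bc} with the same eigenvalue are scalar multiples of each other. For the nodal count, interior zeros of $u$ correspond precisely to points $x\in(-1,1)$ where $\theta(x)\in\pi_p\,\mathbb{Z}$; labelling eigenvalues so that at $\lambda_i^{\!V}$ the total increment $\theta(1;\lambda_i^{\!V})-\theta(-1;\lambda_i^{\!V})$ lies in the correct band $(i-1)\pi_p,(i+1)\pi_p)$ determined by the two Robin conditions, strict monotonicity in $\lambda$ forces $u_i$ to have exactly $i$ interior zeros, hence $i+1$ nodal domains.

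The main obstacle is the construction and comparison analysis of the $p$-Prüfer transformation: verifying the required strict monotonicity of $F$ in $\lambda$, ensuring that the angular ODE does not have equilibria in the wrong places, and translating the Robin conditions into single-valued conditions on $\theta$ modulo $\pi_p$. Because this machinery has already been developed for Robin problems on a half-interval (see \cite{MR1120904,MR1605441}), the argument amounts to a careful bookkeeping of the two-sided Robin case rather than a fundamentally new idea, which is why the paper opts to omit the details.
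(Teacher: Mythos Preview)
Your proposal is correct and matches the paper's approach: the paper itself omits all details, merely stating that the Sturm--Liouville theory for $-\Delta_p$ developed in \cite{MR1120904,MR1605441} (which is precisely the $p$-Pr\"ufer transformation you sketch) carries over to the two-sided Robin problem. You have supplied exactly the outline the authors chose to leave out, and you even acknowledge this in your final paragraph.
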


With the help of the preceding lemma, we obtain the following
monotonicity property.

\begin{lemma} \label{monotony p} 
  For $\alpha\in \R$, let
  $(u_{0}, \lambda_0^{\!\! V})$ and $(u_{1},\lambda_1^{\!\! V})$ be
  the first and second eigenpair of the Schr\"odinger operator
  $-\Delta_{p}+V\abs{\cdot}^{p-2}\cdot$ with Robin boundary conditions~\eqref{p laplace bc}.
  Then the ratio $u_1/u_0$ is monotonically decreasing in $x$.
\end{lemma}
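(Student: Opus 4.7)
The plan is to compare the two Riccati-type variables already at hand in the excerpt. Since $u_0>0$ on $\bar I$ by the strong maximum principle, and by Lemma~\ref{lem:nodal-domain} $u_1$ has exactly two nodal domains meeting at a unique interior point $x_1\in I$ (with $u_1>0$ on $(-1,x_1)$, $u_1<0$ on $(x_1,1)$, and $u_1'(x_1)<0$), the transformed variables
\begin{displaymath}
  v_i:=\frac{|u_i'|^{p-2}u_i'}{|u_i|^{p-2}u_i}
\end{displaymath}
are continuous on $\bar I$ for $i=0$ and on $\bar I\setminus\{x_1\}$ for $i=1$, with $v_1(x)\to-\infty$ as $x\uparrow x_1$ and $v_1(x)\to+\infty$ as $x\downarrow x_1$. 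Each satisfies the Riccati equation~\eqref{q prime} with its respective eigenvalue, and by~\eqref{eq:2} both satisfy $v_i(\pm1)=\mp\alpha$.

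Writing $r_i:=u_i'/u_i$, so that $v_i=|r_i|^{p-2}r_i$ is a strictly increasing function of $r_i$, one has
\begin{displaymath}
  \left(\frac{u_1}{u_0}\right)' = \frac{u_1}{u_0}\,(r_1-r_0),
\end{displaymath}
so the lemma reduces to proving $v_0\ge v_1$ on $(-1,x_1)$ and $v_0\le v_1$ on $(x_1,1)$: the factor $u_1/u_0$ is positive on the first subinterval and negative on the second, while $r_1-r_0$ carries the opposite sign in each case, so the product is non-positive throughout. The exceptional point $x_1$ is handled directly: $u_1/u_0$ is smooth there (as $u_0>0$) and $(u_1/u_0)'(x_1)=u_1'(x_1)/u_0(x_1)<0$.

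The two sign inequalities follow from a standard Riccati comparison. Subtracting the Riccati equations for $v_0$ and $v_1$ yields
\begin{displaymath}
  (v_0-v_1)' = (\lambda_1-\lambda_0) - (p-1)\bigl(|v_0|^{p/(p-1)}-|v_1|^{p/(p-1)}\bigr),
\end{displaymath}
so at any point $y$ where $v_0(y)=v_1(y)$ one has $(v_0-v_1)'(y)=\lambda_1-\lambda_0>0$, the strict spectral gap coming from Lemma~\ref{lem:nodal-domain}. Since $v_0-v_1$ vanishes at $x=-1$ with positive right-derivative, and cannot transit from positive to negative values (as the derivative at any zero of $v_0-v_1$ is strictly positive), we conclude $v_0\ge v_1$ on $(-1,x_1)$; a symmetric argument starting from $x=1$ and exploiting $v_0-v_1\to-\infty$ as $x\downarrow x_1$ yields $v_0\le v_1$ on $(x_1,1)$. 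The main delicate point is the behaviour at the singularity $x_1$, but $v_1$ blows up in the ``helpful'' direction on each side, so the blow-up \emph{reinforces} rather than obstructs the desired comparison.
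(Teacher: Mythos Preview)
Your argument is correct and follows essentially the same route as the paper: both proofs reduce the monotonicity of $u_1/u_0$ to the comparison $v_1<v_0$ on $(-1,x_1)$ (and the reverse on $(x_1,1)$), obtained from the Riccati equations~\eqref{q prime} together with the common boundary value $v_0(\pm1)=v_1(\pm1)=\mp\alpha$ and the strict inequality $\lambda_0^{V}<\lambda_1^{V}$. The paper phrases the computation via $\phi=\log(u_1/u_0)$, while you write $(u_1/u_0)'=(u_1/u_0)(r_1-r_0)$ and handle the zero $x_1$ separately; these are purely cosmetic differences.
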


\begin{proof}
  By the nodal domain theorem (Lemma~\ref{lem:nodal-domain}), $u_{1}$
  admits exactly one zero $x_0$ in $I$, and by construction, $u_1$ is positive
near $x=-1$. Thus 
\begin{displaymath}
  \phi:=\log \frac{u_1}{u_0} \end{displaymath}
 is well-defined on $[-1,x_0)$. We claim that $\phi'<0$ on $[-1,x_0)$. We
compute 
\begin{equation*} 
\phi'=  \frac{u_1'}{u_1}-\frac{u_0'}{u_0}= |v_1|^{p^{\star}}v_1-
|v_0|^{p^{\star}}v_0,
\end{equation*}
where $v_{i}$ is defined by~\eqref{eq:5} for $u=u_{i}$ and $p^{\star}= -(p-2)/(p-1)$. 
As $s\mapsto s|s|^{p^\star}$ is increasing, $\phi'<0$ on $[-1,x_0)$ if $v_1<v_0$ on $[-1,x_0)$. 

 By~\eqref{eq:2},
$v_{0}(-1)=\alpha=v_{1}(-1)$ and since
$\lambda_0^{\!\! V}<\lambda_1^{\!\! V}$, the
Riccati equation~\eqref{q prime} implies that
\begin{displaymath}
  v'_{1}(-1)=(V-\lambda_1^{\!\! V})-(p-1)|\alpha|^{\frac{p}{p-1}}
  <(V-\lambda_0^{\!\! V})-(p-1)|\alpha|^{\frac{p}{p-1}}=v'_{0}(-1).
\end{displaymath}
Hence  $v_1<v_0$ in a neighbourhood of $-1$.

Since $v_i\in C(\bar{I})$, there 
exists 
 a largest $y_{0}\in (-1,x_{0}]$ such that
$v_{1}<v_{0}$ on $(-1,y_{0})$. 

If $y_{0}<x_{0}$, then
$v_1(y_0)=v_0(y_0)$.   
The Riccati equation~\eqref{q prime}, this time evaluated at $y_{0}$, implies that 
 $v'_1(\xi_0)<v'_0(\xi_0)$ and
hence $v_{1}>v_{0}$ on $(y_{0}-\epsilon, y_0)$,
contradicting that $v_1<v_0$ on $(-1,y_0)$. Therefore $y_{0}=x_{0}$,
$v_1<v_0$,  and  $\phi'<0$ on $[-1,x_0)$.

Similarly, one can show that $\log(-u_1/u_0)$ is increasing on
$(x_0,1]$.  It follows that the ratio $u_1/u_0$ is
monotonically decreasing on the whole interval $\bar{I}$.
\end{proof}

\begin{lemma} \label{lemma 3.1 p} 
 {Let
  $(u_{0}, \lambda_0^{\!\! V})$ and $(u_{1},\lambda_1^{\!\! V})$ be
  the first and second eigenpair of the Schr\"odinger operator
  $-\Delta_{p}+V\abs{\cdot}^{p-2}\cdot$ with Robin boundary conditions~\eqref{p laplace
    bc}.  Then
  $|u_1|^p-|u_0|^p$ has at least one and at most two zeroes in $I$.}	
To be precise, there exists
  $\xi_{-}$, $\xi_{+}\in \bar{I}$ at least one of which is an interior point of $I$,
   such that $\xi_{-}<\xi_{+}$,
   and 
  \begin{equation}
    \label{u1-u0 p}
    \begin{split}
      |u_1|^p-|u_0|^p<0&\text{ on
        $(\xi_{-},\xi_{+})$}\quad\text{and}\\
      |u_1|^p-|u_0|^p>0&\text{ on
        $I\setminus [\xi_{-},\xi_{+}]$}.
    \end{split}
  \end{equation}

\end{lemma}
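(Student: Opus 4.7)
The plan is to study the ratio $r:=u_1/u_0$, which is continuous on $\bar I$ because the eigenfunction $u_0$ is strictly positive on $\bar I$ (by the same ODE uniqueness argument used in the discussion preceding Lemma \ref{lem:nodal-domain}: if $u_0$ vanished at $x=\pm 1$, the Robin condition would give $u_0'(\pm 1)=0$ as well, forcing $u_0\equiv 0$). Since $|u_1|^p-|u_0|^p=u_0^p(|r|^p-1)$ and $u_0>0$, the sign of the quantity of interest is completely governed by $|r|-1$.

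First I would upgrade Lemma \ref{monotony p} to strict monotonicity on all of $\bar I$. The inequality $\phi'=|v_1|^{p^\star}v_1-|v_0|^{p^\star}v_0<0$ established in that proof shows $r$ is strictly decreasing on $[-1,x_0)$ and, by the symmetric argument, on $(x_0,1]$. Continuity of $r$ together with $r(x_0)=0$, $r>0$ on $[-1,x_0)$, and $r<0$ on $(x_0,1]$ then yields strict monotonicity across $x_0$ as well. Consequently $|r|$ is strictly decreasing from $r(-1)\ge 0$ down to $0$ on $[-1,x_0]$, and strictly increasing from $0$ up to $|r(1)|\ge 0$ on $[x_0,1]$; in particular $|r|$ is strictly unimodal with minimum value $0<1$ attained at $x_0$.

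Now I would define the two candidate zeros. If $r(-1)>1$, let $\xi_-\in(-1,x_0)$ be the unique solution of $r(\xi_-)=1$ (existence by the intermediate value theorem, uniqueness by strict monotonicity); otherwise set $\xi_-=-1$. Symmetrically, if $r(1)<-1$, let $\xi_+\in(x_0,1)$ be the unique solution of $r(\xi_+)=-1$; otherwise set $\xi_+=1$. Strict monotonicity of $|r|$ on each side of $x_0$ gives $|r|<1$ on $(\xi_-,\xi_+)$ and $|r|>1$ on $[-1,\xi_-)\cup(\xi_+,1]$, which is exactly the sign pattern \eqref{u1-u0 p} once intersected with the open interval $I$. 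The same strict monotonicity shows $|r|-1$ has at most two zeros in $\bar I$, hence $|u_1|^p-|u_0|^p$ has at most two zeros there.

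The final step is to rule out the possibility that both $\xi_\pm$ lie on $\partial I$. Here I would use the normalization $\int_I|u_0|^p\dx=1=\int_I|u_1|^p\dx$, so that $\int_I\left(|u_1|^p-|u_0|^p\right)\dx=0$. Since $|u_1|^p-|u_0|^p=-|u_0(x_0)|^p<0$ at $x_0$ and the difference is continuous, it is strictly negative on an open neighbourhood of $x_0$; the vanishing integral therefore forces it to be strictly positive somewhere in $I$, which means at least one of the intervals $(-1,\xi_-)$ or $(\xi_+,1)$ is non-empty, i.e.\ at least one of $\xi_-,\xi_+$ is an interior point of $I$. The only mildly delicate point is the strict form of Lemma \ref{monotony p} at $x_0$; everything else is a direct consequence of strict monotonicity of $r$ combined with the normalization.
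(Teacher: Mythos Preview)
Your argument is correct and follows essentially the same route as the paper: both exploit the monotonicity of $r=u_1/u_0$ from Lemma~\ref{monotony p} to see that $|u_1|^p-|u_0|^p$ vanishes precisely where $r=\pm 1$, and both use the normalisation together with the strict negativity at the node $x_0$ to force at least one interior zero. Your write-up is a bit more careful than the paper's in explicitly verifying the sign pattern~\eqref{u1-u0 p} and in checking strict monotonicity across $x_0$, but these are refinements of presentation rather than a different method.
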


\begin{proof}   

 The normalised eigenfunctions satisfy $\int_{I}|u_0|^p\,
  \dx=\int_{I}|u_1|^p\,\dx=1$,  and so 
  \begin{displaymath}
    \psi:=|u_1|^p-|u_0|^p 
  \end{displaymath}
  has mean value $\dashint_{I}\psi\,\dx=0$. At the zero
  $x_{0}$ of $u_{1}$,  $\psi(x_{0})=-|u_0|^p(x_0)<0$, and so
  there must  be a $y_0\in I $ such that
  $\psi(y_{0})>0$.    Since $\psi$ changes sign in $I$, it
 has at least one zero $\xi_{0}$.  
 On the other hand, $\xi$ is a zero of $\psi$ if and only if the ratio
  $\frac{u_{1}}{u_{0}}(\xi)=\pm 1$.    By Lemma \ref{monotony p}, 
  $u_1/u_0$ is monotonically decreasing, so
  there exists at most two points $-1\le \xi_{-}<\xi_{+}\le 1$
  satisfying $\frac{u_1}{u_0}(\xi_{-})=1$ and
  $\frac{u_1}{u_0}(\xi_{+})=-1$. One of these is $\xi_{0}$.
  Therefore, $\psi$ has at least one zero in $I$
  and at most two zeroes in $\bar{I}$.
\end{proof}

\section{Fundamental gap estimates for symmetric single-well potentials}

\label{single well section}

In this section, we prove Theorem \ref{single well p} using  Lemma~\ref{varying potential}--Lemma~\ref{lemma 3.1 p}.

\begin{proof}[Proof of Theorem \ref{single well p}]
  Let $V$ be a symmetric single-well potential. Define  $\{V^{t}\}_{t \in \R^+}$  by $V^{t}(x):=t V(x)$.
  For this family of potentials, 
  $\ddt{V}^t=V$ and so  \eqref{gap dot} 
  results in
  \begin{displaymath}
    \frac{\partial}{\partial t}\Gamma_{\!p}(V^{t})=\int_{I} V
  \left(|u_1|^p-|u_0|^p\right)\dx
  \end{displaymath}
  where $(u_{0}, \lambda_0^{\!\! V})$ and $(u_{1},\lambda_1^{\!\! V})$
  are the first and second eigenpairs of the Schr\"odinger operator
  $-\Delta_{p}+V\abs{\cdot}^{p-2}\cdot$ with Robin boundary
  conditions~\eqref{p laplace bc}.

  As $V$ is symmetric on $\bar{I}$, the eigenfunctions $u_{0}$ and
  $u_{1}$ are symmetric and antisymmetric respectively.  Hence
  $|u_1|^p-|u_0|^p$ is symmetric, and the two zeroes of
  $|u_1|^p-|u_0|^p$ found in Lemma~\ref{lemma 3.1 p} are likewise
  symmetric, with $\xi_{-}=-\xi_+$.
  
  Since
  $V$ is symmetric and single-well, $V$ is non-increasing on
  $(-1,0)$ and non-decreasing on
  $(0,1)$.    Thus, using~\eqref{u1-u0 p}, we have 
\begin{align*}
  \frac{\partial}{\partial t}\Gamma_{\!p}(V^{t})&= \int_{-1}^1 V\left(|u_1|^p-|u_0|^p\right)\,\dx\\
&=  \int_{-1}^{\xi_{-}} V\left(|u_1|^p-|u_0|^p\right)\,\dx
  +\int_{\xi_{-}}^{\xi_{+}} V\left(|u_1|^p-|u_0|^p\right)\,\dx\\
&\hspace{4cm}  +\int_{\xi_{+}}^1V\left(|u_1|^p-|u_0|^p\right)\,\dx\\
& \ge
  V(\xi_{-})\int_{-1}^{\xi_{-}}\left(|u_1|^p-|u_0|^p\right)\,\dx 
+V(\xi_{\pm})\int_{\xi_{-}}^{\xi_{+}} \left(|u_1|^p-|u_0|^p\right)\,\dx\\
&\hspace{4cm} +V(\xi_{+})\int_{\xi_{+}}^1\left(|u_1|^p-|u_0|^p\right)\,\dx\\
&=  V(\xi_{\pm}) \, \int_{I} \left(|u_1|^p-|u_0|^p\right)\dx  =0
\end{align*}
with equality only when $V$ is constant. Summarising, we have shown that
\begin{displaymath}
  \frac{\partial}{\partial t}\Gamma_{\!p}(V^{t})\ge 0 
\end{displaymath}
with equality only when $V$ is constant. Integrating 
with respect to $t$ over $(0,1)$ proves the theorem.
\end{proof}

\section{Comparison of the fundamental gap between convex and
    linear potentials}  \label{section four}

{Using a similar strategy to that used in the previous section, we can show that
the fundamental gap among convex potentials is minimised by a
linear one.} 

\begin{proof}[Proof of Theorem \ref{linear better than convex p}]
  Let $V$ be a convex potential on $I$ which is not affine.  Let
  $\xi_{-}$ and $\xi_+\in \bar{I}$ be such that \eqref{u1-u0 p} holds,
  for the corresponding eigenfunctions $u_0$, $u_1$.
 
  Let $L_V(x)=a x+b$
    be the line that intersects the graph of $V$ at $\xi_{-}$
  and $\xi_+\in \bar{I}$. By the convexity of $V$,
  \begin{displaymath}
    V-L_V\ge 0\text{ on }(-1,\xi_{-}),\quad
    V-L_V\le 0\text{ on }(\xi_{-},\xi_{+}),\quad
    V-L_V\ge 0\text{ on }(\xi_{+}, 1).  
  \end{displaymath}
  Consequently, 
  using \eqref{u1-u0 p},
\begin{displaymath}  
(V-L_V)(|u_1|^p-|u_0|^p)\ge 0 \text{ on }\bar{I}.
\end{displaymath}
In particular, since  $V$ is not affine,
 there exists a
set 
of
positive
 measure 
 on which the
last inequality is strictly positive and hence
\begin{displaymath}
  \int_{I} (V-L_V)(|u_1|^p-|u_0|^p)\,\dx>0.
\end{displaymath}
For the family $\{V^{t}\}_{t\in [0,1]}$ given by $V^{t}= t V+
(1-t)L_V$,
 $\ddt{V}^t=V-L_{V}$ and
so \eqref{gap dot} shows that
\begin{displaymath}
    \frac{\partial}{\partial t}\Gamma_{\!p}(V^{t})=\int_{I} \big(V-L_{V}\big)
  \left(|u_1|^p-|u_0|^p\right)\dx>0.   
  \end{displaymath}
Integrating this inequality with respect to $t$ over $(0,1)$ gives
\begin{displaymath}
    \Gamma_{\!p}(V)>\Gamma_{\!p}(L_{V})=\Gamma_{\!p}(a x),
  \end{displaymath}
  where $ax$ is the purely linear part of $L_V$.  We can drop the
  constant term $b$ because adding a constant to the potential shifts
  all eigenvalues by that constant, and therefore has no effect on the
  gap.
\end{proof}

\section{Further technicalities}
\label{linear vs convex section}

In this section we derive some  technical results, mostly for linear potentials, which are necessary for
proving Theorem \ref{convex potential}.

\begin{lemma}   
  Suppose that $\int x(u_1^2-u_0^2)=0$.    Then $u_1^2-u_0^2$ has exactly two interior zeroes, and 
\begin{equation} 
 \label{two zeroes} 
  u_1(1)^2-u_0(1)^2 >0\text{ and }u_1(-1)^2-u_0(-1)^2 >0.
\end{equation}
\end{lemma}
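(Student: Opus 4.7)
The plan is to exploit a judicious choice of ``centre of mass'' constant together with Lemma~\ref{lemma 3.1 p}. Applied with $p=2$, that lemma provides $\xi_-,\xi_+\in\bar I$ with $\xi_-<\xi_+$, at least one interior, such that $\psi:=u_1^2-u_0^2$ is negative on $(\xi_-,\xi_+)$ and positive on $I\setminus[\xi_-,\xi_+]$. What I need to show is that the two degenerate possibilities $\xi_-=-1$ and $\xi_+=1$ are both excluded under the hypothesis $\int_I x\psi\dx=0$.

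The key observation is that the normalisation $\int_I u_i^{\,2}\dx=1$ yields $\int_I\psi\dx=0$, and combined with the hypothesis $\int_I x\psi\dx=0$ this gives $\int_I(x-c)\psi\dx=0$ for every constant $c\in\R$. Suppose for contradiction that $\xi_-=-1$, and choose $c=\xi_+$. On $(-1,\xi_+)$ we have $x-\xi_+<0$ and $\psi<0$, while on $(\xi_+,1)$ we have $x-\xi_+>0$ and $\psi>0$; in both cases the product $(x-\xi_+)\psi$ is non-negative, and is strictly positive on a set of positive measure, forcing $\int_I(x-\xi_+)\psi\dx>0$, a contradiction. An entirely analogous argument with $c=\xi_-$ rules out $\xi_+=1$. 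Hence both $\xi_\pm$ lie in the interior of $I$, so $\psi$ has exactly two interior zeroes, which is the first assertion of the lemma.

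For the boundary inequalities~\eqref{two zeroes} I would invoke the \emph{strict} monotonicity of $u_1/u_0$ on $\bar I$, which was essentially established in the proof of Lemma~\ref{monotony p} (the inequality $v_1<v_0$ is strict, giving $\phi'<0$ strictly). Since $u_1/u_0$ strictly decreases across $\bar I$ and takes the values $+1$ at $\xi_-$ and $-1$ at $\xi_+$, and since we have just shown $-1<\xi_-<\xi_+<1$, it follows that $(u_1/u_0)(-1)>1$ and $(u_1/u_0)(1)<-1$. Squaring and multiplying through by $u_0(\pm1)^2>0$ yields $u_1(\pm1)^2-u_0(\pm1)^2>0$.

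The main subtlety, and really the whole content of the lemma, is recognising that the two moment identities $\int_I\psi\dx=0$ and $\int_I x\psi\dx=0$ are precisely what is needed to force both zeroes of $\psi$ into the interior via the sign-matching trick with $(x-c)\psi$; once that is in hand, the boundary positivity follows immediately from the strict monotonicity already proved in Section~\ref{technical results}.
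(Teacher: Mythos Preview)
Your proof is correct and follows essentially the same approach as the paper: both argue by contradiction using the sign of $(x-c)\psi$ for an appropriately chosen constant $c$ (the paper phrases it as ``suppose $\xi_0$ is the sole interior zero'' and takes $c=\xi_0$; you phrase it as ``suppose $\xi_-=-1$'' and take $c=\xi_+$, which amounts to the same thing). Your treatment of the boundary inequalities~\eqref{two zeroes} via the strict monotonicity of $u_1/u_0$ is in fact more explicit than the paper's, which stops after establishing that $\psi$ has two interior zeroes and leaves the boundary positivity implicit.
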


\begin{proof}

  From Lemma \ref{lemma 3.1 p}, at least one of these is positive, and
  $u_1^2-u_0^2$ has either one or two interior zeroes.  Suppose that
  $\xi_0$ is the \emph{sole} zero of $u_1^2-u_0^2$, so that
  $(x-\xi_0)(u_1^2-u_0^2)$ is nonzero and has the same sign for all
  $x\not=\xi_0$.  Then
  \begin{displaymath}
   0\not=\int (x-\xi_0)(u_1^2-u_0^2)\dx= \int x(u_1^2-u_0^2)\dx-
    \xi_0 \int u_1^2-u_0^2\dx= 0,
  \end{displaymath}
  where in the last step we use that
  $\int u_1^2\dx=\int u_0^2\dx$. The contradiction implies that
  $u_1^2-u_0^2$ has two zeros.
\end{proof}

Now we  compare the first eigenfunctions in the cases that $V$
is linear and $V$ is zero.

\begin{lemma} \label{compare to zero} 
  For $a\ge 0$, let $(u^{ax}_0,\lambdaa)$ be the first eigenpair of the Schr\"odinger
  operator $-\Delta+ax$ with Robin boundary conditions~\eqref{main
    boundary}. Then for $a>0$, the ratio $u^{ax}_0/u^0_0$ is monotone
  decreasing along $\bar{I}$.
\end{lemma}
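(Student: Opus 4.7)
The plan is to study the Wronskian-type quantity
$W(x) := u_0'(x)\, w_0(x) - u_0(x)\, w_0'(x)$,
where $u_0 := u^{ax}_0$ and $w_0 := u^0_0$, and to show $W \le 0$ on $\bar I$ with strict inequality in $I$. Since both eigenfunctions are strictly positive on $\bar I$ (as discussed at the start of Section~\ref{technical results}), and $(u_0/w_0)' = W/w_0^2$, this immediately gives the asserted monotonicity.

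The boundary values $W(\pm 1)=0$ are immediate from the Robin condition $f'(\pm 1) = \mp\alpha f(\pm 1)$ common to $u_0$ and $w_0$: the two terms cancel. Differentiating $W$ and substituting the eigenvalue equations $-u_0'' + ax\, u_0 = \lambdaa u_0$ and $-w_0'' = \lambdan w_0$ yields
\[
W'(x) = \bigl(ax + c\bigr)\, u_0(x)\, w_0(x),\qquad c := \lambdan - \lambdaa.
\]
Since $u_0 w_0 > 0$ on $\bar I$ and the affine factor $ax+c$ has positive slope $a$, $W'$ changes sign at most once on $\bar I$, at the point $x_\star := -c/a$.

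The crucial step is to locate $x_\star$ strictly inside $I$, i.e.\ to show $0 < c < a$. For the lower bound, the symmetry of $w_0$ gives $\int_I x\, w_0^2\,\dx = 0$, whence $\mathcal{R}[w_0, ax, 2, \alpha] = \lambdan$; the variational characterisation then gives $\lambdaa \le \lambdan$, with strictness because $w_0$ is not an eigenfunction of $-\Delta + ax$ when $a>0$. For the upper bound, shifting the potential by the constant $a$ shows that the first eigenvalue of $-\Delta + a(x+1)$ equals $\lambdaa + a$; since $a(x+1) \ge 0$ on $\bar I$ is strictly positive a.e., evaluating the Rayleigh quotient at the positive first eigenfunction yields $\lambdaa + a > \lambdan$, i.e.\ $c < a$.

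With $x_\star \in (-1, 0)$, $W$ is strictly decreasing on $(-1, x_\star)$ and strictly increasing on $(x_\star, 1)$; combined with $W(\pm 1)=0$, this forces $W<0$ on $I$. The main obstacle is really the two-sided bound $0<c<a$: without both strict inequalities, $W'$ could conceivably keep a single sign throughout $\bar I$, in which case $W(\pm 1)=0$ would force $W\equiv 0$, contradicting linear independence of $u_0$ and $w_0$ as eigenfunctions of distinct operators.
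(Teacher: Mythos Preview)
Your proof is correct and follows essentially the same Wronskian argument as the paper: both compute $W'(x)=(ax-\lambdaa+\lambdan)\,u_0 w_0$, locate its sign change strictly inside $I$, and combine this with $W(\pm 1)=0$ to force $W<0$ on $I$. The only minor difference is that you place the pivot via direct Rayleigh-quotient comparisons (using the evenness of $u_0^0$ and the nonnegativity of $a(x+1)$, yielding the sharper $x_\star\in(-1,0)$), whereas the paper uses the Hellmann--Feynman formula of Lemma~\ref{varying potential} to obtain $|\lambdaa-\lambdan|<a$ and hence $x_1\in(-1,1)$.
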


\begin{proof}

Let  $x_1=(\lambdaa-\lambdan)/a$, for $a>0$. We claim that  $x_1\in I$.

For later convenience, we work with any eigenpair
$(u^{ax},\lambda^{ax})$.  We can use Lemma \ref{varying potential} to
write
\begin{equation} \label{wellography}
 \lambda^{ax}-\lambda^0
 = \int_{t=0}^{a} \frac{d}{dr} \lambda^{rx} \,dr
 =  \int_{t=0}^{a} \int_{I} x\left(u^{rx}\right)^2 \dx \,dr. 
 \end{equation}
However, on $I$, $-\left(u^{rx}\right)^2< x\left(u^{rx}\right)^2<  \left(u^{rx}\right)^2$, and so
$$ -1= - \int_{I} \left(u^{rx}\right)^2 \dx <  \int_{I} x\left(u^{rx}\right)^2 \dx< \int_{I} \left(u^{rx}\right)^2 \dx=1.$$
Using this estimate for the  integrand of \eqref{wellography} leads to 
\begin{equation}-{a} <\lambda^{ax}-\lambda^0 <a, \label{x1 in I}
\end{equation}  and therefore $x_1
\in (-1,1)$.

We now write the first eigenfunctions for linear potentials as 
$u^{ax}_0=\ua$ and $u^0_0=\tu$. For any  $x\in (-1, x_1]$, 
\begin{align*}
\left(\frac{\ua}{\tu}\right)'(x)
  &=  \frac1{\abs{\tu}^2}\left( \ua'\tu -\ua{\tu}'\right)(x)\\
&=  \frac1{{\tu}^2}\int_{-1}^x \left(\ua'\tu -\ua{\tu}'\right)'\,dy 
  \text{   where we use the boundary values at $-1$}\\
&=  \frac1{{\tu}^2}\int_{-1}^x \ua''\tu+ \ua'{\tu}' -\ua'{\tu}'-\ua{\tu}''\,dy \\
&=  \frac1{{\tu}^2}\int_{-1}^x      (ay-\lambdaa+\lambdan)\ua\tu \,dy \\
&<0,
\end{align*}
where we have used that $ay<ax<ax_1=\lambdaa-\lambdan$.  Similarly, for $x \in(x_1,1)$,
\begin{align*}
\left(\frac{\ua}{\tu }\right)' (x)  =
 - \frac1{{\tu}^2}\int_{x}^1 \left({\ua}'\tu -\ua{\tu}'\right)'\dx 
=  -\frac1{{\tu}^2}\int_{x}^1      (ay-\lambdaa+\lambdan)\ua\tu \dx 
<0.
\end{align*}
\end{proof}

\begin{lemma}  \label{lemma 5.4} 
For $a\ge 0$, let $(u^{ax}_0,\lambda_0^{ax})$ be the first eigenpair of the
Schr\"odinger operator $-\Delta+ax$ with Robin boundary conditions~\eqref{main boundary}.  
Then for $a>0$, 
\begin{displaymath}
\abs{u^{ax}_0}^2(1)-\abs{u^{ax}_0}^2(-1)<0.
\end{displaymath}
\end{lemma}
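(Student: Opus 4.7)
The plan is to combine the monotonicity in Lemma~\ref{compare to zero} with the symmetry of the zero-potential first eigenfunction $\tu = u_0^0$.

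First I would note that when $V \equiv 0$ the eigenvalue problem $-u'' = \lambda u$ subject to $u'(\pm 1) = \mp \alpha u(\pm 1)$ is invariant under the reflection $x \mapsto -x$. Since by Lemma~\ref{lem:nodal-domain} the first eigenvalue $\lambdan$ is simple and the corresponding eigenfunction $\tu$ is strictly positive on $\bar I$, the reflected function $\tu(-\,\cdot\,)$ must coincide with $\tu$ up to scalar, and positivity forces that scalar to be $+1$. Hence $\tu$ is even; in particular $\tu(1) = \tu(-1) > 0$.

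Next I would record that, for $a > 0$, the inequalities used inside the proof of Lemma~\ref{compare to zero} are in fact \emph{strict}: on $(-1, x_1)$ the integrand $(ay - \lambdaa + \lambdan)\ua\tu$ is strictly negative (since $ay < ax_1 = \lambdaa - \lambdan$ and $\ua,\tu > 0$), while on $(x_1, 1)$ its sign is reversed and the minus sign in front of the integral again produces a strictly negative derivative. Thus $(\ua/\tu)'(x) < 0$ for every $x \in (-1, 1)$, and consequently $\ua/\tu$ is strictly decreasing on the closed interval $[-1, 1]$, so
\begin{displaymath}
  \frac{\ua(1)}{\tu(1)} < \frac{\ua(-1)}{\tu(-1)}.
\end{displaymath}

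The conclusion is then purely arithmetic. Multiplying the previous inequality by the common positive value $\tu(1) = \tu(-1)$ gives $\ua(1) < \ua(-1)$. Since both sides are positive (the first Robin eigenfunction is strictly positive on $\bar I$ by the strong maximum principle), squaring preserves the strict inequality and yields $\abs{\ua}^2(1) - \abs{\ua}^2(-1) < 0$, which is the claim. I do not anticipate a substantive obstacle: the symmetry of $\tu$ is classical, and the strict monotonicity is already essentially contained in the proof of Lemma~\ref{compare to zero}.
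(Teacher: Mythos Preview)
Your proposal is correct and follows essentially the same route as the paper: combine the (strict) monotonicity of $\ua/\tu$ from Lemma~\ref{compare to zero} with the evenness of the zero-potential ground state $\tu$, then square. You are simply more explicit than the paper about why $\tu$ is even and why the monotonicity is strict; these are the same ingredients the paper uses implicitly.
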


\begin{proof}   
  Again we write $u^{ax}_0=\ua$ and $u^0_0=\tu$. 
  From Lemma \ref{compare to zero}, $\ua/\tu$ is decreasing, so that
  for $x>0$,
\begin{displaymath} 
  \frac{\ua(-x)}{\tu(-x)}> \frac{\ua(x)}{\tu(x)}  
\end{displaymath}
however $\tu$ is even, so 
\begin{equation*}\label{eq:monotonicity}
  {{\ua}^2(-x)}> {\ua^2(x)}; 
\end{equation*}
then the result follows directly with $x=1$.
\end{proof}

\begin{lemma}  \label{lemma 5.3} 
  For $a>0$, let $\lambda^{ax}_1$ be the second eigenvalue 
 of the Schr\"odinger operator $-\Delta+ax$ for Robin
 boundary conditions~\eqref{main boundary} with parameter  $\alpha$. Then
\begin{equation} \label{a alpha eqn} 
  \alpha^2+ \lambda_1^{ax}+a>0.
\end{equation}
\end{lemma}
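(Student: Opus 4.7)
The plan is to bound $\lambda_1^{ax}$ from below by reducing to the second Robin eigenvalue of the free operator $-\Delta$. First I would rewrite the eigenvalue equation with $V=ax$ as
\begin{equation*}
  -u'' + a(x+1)\,u = (\lambda + a)\,u,
\end{equation*}
showing that $\lambda_i^{ax} + a$ are precisely the Robin eigenvalues (for the same parameter $\alpha$) of the Schr\"odinger operator $-\Delta + W$ with $W(x) := a(x+1)\ge 0$ on $\bar I$. Since $W\ge 0$, the min-max characterisation~\eqref{eq:1} gives the monotonicity $\lambda_1^{ax} + a \ge \lambda_1^{0}$, where $\lambda_1^{0}$ denotes the second Robin eigenvalue of $-\Delta$ with parameter $\alpha$. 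It therefore suffices to prove the sharpened inequality $\lambda_1^{0} > -\alpha^2$.

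For this step I would exploit the symmetry of the zero potential. By Lemma~\ref{lem:nodal-domain} every eigenvalue is simple and the second eigenfunction $u$ has exactly one zero in $(-1,1)$; combined with orthogonality to the (even, positive) ground state, $u$ must be odd, so $u(0) = 0$ and the Rayleigh quotient reduces to
\begin{equation*}
  \lambda_1^{0} = \frac{\int_0^1 (u')^2 \dx + \alpha\, u^2(1)}{\int_0^1 u^2 \dx}.
\end{equation*}
The core step is then a ``complete the square'' computation, obtained by integration by parts using $u(0)=0$:
\begin{equation*}
  \int_0^1 \bigl(u' + \alpha u\bigr)^2 \dx = \int_0^1 (u')^2 \dx + \alpha^2\!\int_0^1 u^2 \dx + \alpha\, u^2(1).
\end{equation*}
The left-hand side is strictly positive, because $u' + \alpha u \equiv 0$ together with $u(0) = 0$ would force $u \equiv 0$; dividing by $\int_0^1 u^2 \dx > 0$ gives $\lambda_1^{0} + \alpha^2 > 0$, and chaining the inequalities yields $\alpha^2 + \lambda_1^{ax} + a > 0$.

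The main creative step is spotting the complete-the-square identity, which delivers $\lambda_1^{0} > -\alpha^2$ uniformly in $\alpha\in\R$ without any case split on the sign of $\lambda_1^{0}$. Once this is in hand, the shift $V \mapsto V + a$ that converts the linear potential $ax$ into the non-negative potential $a(x+1)$ does the rest. Note that the argument works for every real $\alpha$ and does not require $\alpha \ge -\tfrac{1}{2}$.
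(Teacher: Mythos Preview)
Your argument is correct and reaches the same reduction $\lambda_1^{ax}+a\ge\lambda_1^{0}$ as the paper, but by a different and cleaner mechanism: the paper obtains this bound by integrating the Hellmann--Feynman identity (Lemma~\ref{varying potential}) together with $\lvert\int_I x\,u^2\,\dx\rvert<1$, whereas your potential shift $ax\mapsto a(x+1)\ge0$ plus min--max monotonicity gets there in one line. The real divergence is in the second step: the paper proves $\alpha^2+\lambda_1^{0}>0$ by an explicit case split on the sign of $\alpha$ and of $\lambda_1^{0}$, solving the free Robin problem in each regime (trigonometric, linear, hyperbolic eigenfunctions) and checking the inequality by hand. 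Your complete-the-square identity on the half-interval,
\[
\int_0^1(u'+\alpha u)^2\,\dx=\int_0^1(u')^2\,\dx+\alpha\,u(1)^2+\alpha^2\int_0^1 u^2\,\dx,
\]
dispatches all $\alpha\in\R$ at once and is genuinely more elegant; it also makes transparent why the bound $-\alpha^2$ is natural. One small imprecision: orthogonality to the even ground state does not by itself force $u_1$ to be odd (even functions can be mutually orthogonal). The correct justification is that simplicity of $\lambda_1^{0}$ and reflection symmetry force $u_1$ to be either even or odd, and an even eigenfunction with a single interior zero would have $u_1(0)=u_1'(0)=0$, hence vanish identically; so $u_1$ is odd. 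With that fix the proof is complete.
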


\begin{proof}
From \eqref{x1 in I}, we have 
$ \lambda_1^{ax}+a \ge \lambda_1^0$. Then
 \begin{displaymath}
     \alpha^2+\lambda_1^{ax}+a\ge \alpha^2+ \lambda_1^{0}.
 \end{displaymath}
 We estimate $\lambda_1^0$: we have a zero potential and everything
 may be done explicitly.

 If $\alpha\ge0$ then \eqref{a alpha eqn} follows directly from the
 positivity of the Rayleigh quotient.

 When $\alpha<0$, $\lambda_0$ is negative, since otherwise the general
 solution to $-u''=\lambda u$ is given by
 $u(x)=c_1\cos\sqrt{{\lambda}}x+ c_2\sin\sqrt{{\lambda}}x$; however
 the boundary condition requires that
 $\sqrt{{\lambda}}\tan \sqrt{{\lambda}}=\alpha$, which cannot be
 satisfied if $\alpha<0$.

 In the case that $-1<\alpha<0$, $\lambda_1$ is positive: to be
 precise, $\lambda_1= \mu^2$, where $\mu$ solves
 $-\alpha\, {\tan\mu}={\mu}$, and $u_1=\sin\mu x$.  If $\alpha=-1$,
 then $\lambda_1=0$, with a linear eigenfunction.  If $\alpha<-1$,
 then $\lambda_1$ is negative, however the claim still holds, since
 then $\lambda_1=-\mu^2$ where $\mu$ solves $\mu=-\alpha \tanh\mu$,
 and hence
 \begin{displaymath}
   \alpha^2+ \lambda_1=\alpha^2-\mu^2= \left(\frac{\mu}{\tanh\mu}\right)^2(1-\tanh\mu^2) >0.
\end{displaymath}
The conclusion follows.
\end{proof}

\begin{lemma}  
  Let $(u,\lambda)$ be an eigenpair of the
  Schr\"odinger operator $-\Delta+ax$. Then
\begin{equation}  \label{eq:g is 1}
  \left[ ({u}')^2+ (\lambda-ax){{u}}^2 \right]_{-1}^{1}= -a
\end{equation} and
\begin{equation}
  \label{eq:g is xx}
  \left[  ({{u}}')^2+ (\lambda-ax+1){{u}}^2  - 2x {{u}} {{u}}' \right]_{-1}^{1}
  = 4\lambda\int_{-1}^1 x {{u}}^2  - 5a \int_{-1}^1 x^2{{u}}^2   \dx.   
\end{equation}
\end{lemma}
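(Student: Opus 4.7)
The plan is to derive both identities directly from the ODE $u'' = (ax-\lambda)u$ together with the normalisation $\int_I u^2\,\dx = 1$; no boundary conditions are required at this stage.

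For \eqref{eq:g is 1}, I set $F := (u')^2 + (\lambda - ax) u^2$ and differentiate, using the eigenvalue equation in the form $u'' + (\lambda - ax) u = 0$:
\[
F' = 2 u' \bigl(u'' + (\lambda - ax) u\bigr) - a u^2 = -a u^2.
\]
Integrating from $-1$ to $1$ and using the normalisation gives $[F]_{-1}^{1} = -a \int_I u^2\,\dx = -a$, which is \eqref{eq:g is 1}.

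For \eqref{eq:g is xx}, I set $G := (u')^2 + (\lambda - ax + 1) u^2 - 2x u u'$. Differentiating and once again invoking $u'' = (ax - \lambda)u$, every $uu'$ cross-term cancels and I arrive at
\[
G' = -a u^2 - 2 a x^2 u^2 + 2 \lambda x u^2 - 2 x (u')^2.
\]
Integrating yields the desired right-hand side apart from the stray term $-2 \int_I x (u')^2\,\dx$, which I must rewrite.

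To handle this term, I multiply the ODE by $x^2 u'$ and integrate by parts, using $x^2 u' u'' = \tfrac12 x^2 \bigl((u')^2\bigr)'$ and $x^k u u' = \tfrac{1}{2} x^k \bigl(u^2\bigr)'$ for $k=2,3$. After collecting the boundary contributions I obtain
\[
2 \int_I x (u')^2\,\dx = \bigl[(u')^2 + (\lambda - ax) u^2\bigr]_{-1}^{1} + 3 a \int_I x^2 u^2\,\dx - 2 \lambda \int_I x u^2\,\dx.
\]
The boundary bracket on the right is precisely the left-hand side of \eqref{eq:g is 1}, hence equals $-a$. Substituting and inserting the resulting expression for $2\int_I x(u')^2\,\dx$ into $[G]_{-1}^{1} = -a - 2a \int x^2 u^2\,\dx + 2\lambda \int x u^2\,\dx - 2\int x(u')^2\,\dx$ produces exactly $4\lambda \int_I x u^2\,\dx - 5a \int_I x^2 u^2\,\dx$.

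The main obstacle is the bookkeeping in the $x^2 u'$ integration by parts: several boundary contributions appear, and one must recognise that they assemble into precisely the combination $[(u')^2 + (\lambda - ax) u^2]_{-1}^{1}$ controlled by the first identity, so that after substitution no boundary values survive. Once this cancellation is spotted, the remaining verification is elementary algebra.
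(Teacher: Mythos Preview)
Your argument is correct. For the first identity it coincides with the paper's proof. For the second identity you take a slightly different route: you differentiate the expression $G=(u')^2+(\lambda-ax+1)u^2-2xuu'$ exactly as it appears in the statement, which leaves the residual term $-2\int_I x(u')^2\,\dx$, and then eliminate it via a separate integration by parts (multiplying the ODE by $x^2u'$) that feeds back into the first identity. The paper instead differentiates the modified quantity $H(x)=x^2\bigl((u')^2+(\lambda-ax)u^2\bigr)-2xuu'+u^2$; since $x^2=1$ at $x=\pm1$, one has $[H]_{-1}^{1}=[G]_{-1}^{1}$, while the extra $x^2$ factor is arranged so that $H'=u^2\bigl(4x(\lambda-ax)-ax^2\bigr)$ with no $(u')^2$ term left over. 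The paper's choice is slicker and avoids your auxiliary integration by parts, but your approach is the more natural first attempt and works just as well once the boundary terms are recognised as the combination from \eqref{eq:g is 1}.
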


\begin{proof}
We calculate 
\begin{align*}
\left[ {u}'^2 + (\lambda-ax){u}^2\right]^{1}_{-1}
  &=  \int^1_{-1} \frac{d}{dx}\left[ {u}'^2 + (\lambda-ax){u}^2\right] \dx\\
  &= \int^1_{-1}  2{u} {u}'' + (\lambda-ax) 2 {u} {u}' -a  {u}^2 \dx\\
  &= - a \int^1_{-1}   {u}^2 \dx = -a.
\end{align*}
Next, 
\begin{align*}
&\left[ x^2 \left({u}'^2 + (\lambda-ax){u}^2 \right)-2xu u' + u^2
\right]^{1}_{-1}\\
&\hspace{4cm}=\int^{1}_{-1}\frac{d}{dx} \left[ x^2 \left({u}'^2 + (\lambda-ax){u}^2 \right)-2xu u' + u^2\right]\dx\\
&\hspace{4cm}=\int^{1}_{-1} u^2\left[ 4x(\lambda-ax)-a x^2 \right]\dx\\
&\hspace{4cm}= 4\lambda \int^{1}_{-1} x u^2 \dx -5 a  \int^{1}_{-1} x^2 u^2 \dx.
\end{align*}
\end{proof}

\section{Proof of Theorem \ref{constant vs linear}} 

\label{section six}
We begin by estimating the boundary terms of eigenfunctions with linear potentials.   

\begin{lemma} \label{boundary terms}  Let $u_0$ and $u_1$ 
be the first two eigenfunctions of the
Schr\"odinger operator $-\Delta+ax$ with $a>0$,  and Robin boundary conditions~\eqref{main boundary} with $\alpha\ge-1$.  
 Then
$$u_1(1)^2-   u_0(1)^2- u_1(-1)^2+u_0(-1)^2>0.$$
\end{lemma}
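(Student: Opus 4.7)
Proof plan. The plan is to apply the identity \eqref{eq:g is 1} (proved in the preceding lemma) to each of the first two eigenpairs $(u_0,\lambda_0^{ax})$ and $(u_1,\lambda_1^{ax})$ and then combine the resulting algebraic relations. Using the Robin boundary conditions $u_i'(\pm 1) = \mp\alpha\, u_i(\pm 1)$, which imply $(u_i')^2(\pm 1) = \alpha^2\, u_i(\pm 1)^2$, the identity \eqref{eq:g is 1} reduces for each $i=0,1$ to the purely algebraic relation
\[
(\alpha^2 + \lambda_i^{ax} - a)\, u_i(1)^2 \;-\; (\alpha^2 + \lambda_i^{ax} + a)\, u_i(-1)^2 \;=\; -a.
\]

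Subtracting the $i=0$ version from the $i=1$ version and rearranging (exploiting that the two coefficients differ by exactly $2a$), I would isolate the target quantity $T := u_1(1)^2 - u_0(1)^2 - u_1(-1)^2 + u_0(-1)^2$ in an identity of the form
\[
(\alpha^2 + \lambda_1^{ax} + a)\, T \;=\; 2a\,\bigl(u_1(1)^2 - u_0(1)^2\bigr) \;+\; (\lambda_1^{ax} - \lambda_0^{ax})\,\bigl(u_0(-1)^2 - u_0(1)^2\bigr).
\]
The coefficient on the left is strictly positive by Lemma \ref{lemma 5.3}. The second summand on the right is strictly positive by Lemma \ref{lem:nodal-domain} (which gives $\lambda_1^{ax} > \lambda_0^{ax}$) together with Lemma \ref{lemma 5.4} (which gives $u_0(-1)^2 > u_0(1)^2$).

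The main obstacle is then to control the first summand $2a\bigl(u_1(1)^2 - u_0(1)^2\bigr)$, whose sign is not a priori clear, and this is where I expect the hypothesis $\alpha\ge -1$ to enter. The cleanest route would be to prove the pointwise comparison $u_1(1)^2 \ge u_0(1)^2$, for example by a Pr\"ufer-type phase argument or by adapting the comparison technique used in Lemma \ref{compare to zero} so that the relevant ratio of eigenfunctions is monotone at the boundary. A fallback is to use in parallel the companion identity
\[
(\alpha^2 + \lambda_1^{ax} - a)\, T \;=\; 2a\,\bigl(u_1(-1)^2 - u_0(-1)^2\bigr) \;+\; (\lambda_1^{ax} - \lambda_0^{ax})\,\bigl(u_0(-1)^2 - u_0(1)^2\bigr),
\]
obtained by the alternative rearrangement of the same two algebraic relations; in at least one of the two forms the uncontrolled boundary difference should be manageable. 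A further backup, if needed, is to invoke the second-moment identity \eqref{eq:g is xx}, in which the factor $(\alpha+1)^2$ (nonnegative precisely when $\alpha\ge -1$) appears naturally and can be combined with \eqref{eq:g is 1} to eliminate the problematic term and close the argument.
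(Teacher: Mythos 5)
Your algebraic reduction is exactly the one the paper uses: subtracting the two instances of \eqref{eq:g is 1} and regrouping the coefficients as $(\alpha^2+\lambda^{ax}_1+a)-2a$, etc., gives precisely your identity
\begin{displaymath}
\bigl(\alpha^2+\lambda^{ax}_1+a\bigr)\,T \;=\; 2a\bigl(u_1(1)^2-u_0(1)^2\bigr)+\bigl(\lambda^{ax}_1-\lambda^{ax}_0\bigr)\bigl(u_0(-1)^2-u_0(1)^2\bigr),
\end{displaymath}
with $T:=u_1(1)^2-u_0(1)^2-u_1(-1)^2+u_0(-1)^2$, and the paper disposes of the coefficient and of the second summand exactly as you do (Lemma \ref{lemma 5.3}, Lemma \ref{lemma 5.4}, and simplicity of the eigenvalues). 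The gap is that you never actually prove $u_1(1)^2\ge u_0(1)^2$, and that is the one genuinely nontrivial input. The paper gets it from \eqref{two zeroes}, i.e.\ from the unnumbered lemma opening Section \ref{linear vs convex section}: under the first-moment condition $\int x(u_1^2-u_0^2)\dx=0$, the difference $u_1^2-u_0^2$ has two \emph{interior} zeroes, so by Lemma \ref{lemma 3.1 p} and the monotonicity of $u_1/u_0$ it is strictly positive at \emph{both} endpoints. (That moment condition is available because the lemma is only invoked at a critical point of $a\mapsto\Gamma_{\!2}(ax)$, where \eqref{assumption} holds; without it the inequality can genuinely fail, since $\xi_+$ may equal $1$, or may not exist at all, in which case $u_1(1)^2<u_0(1)^2$.)

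None of your fallbacks closes this. The companion identity trades the uncontrolled term $u_1(1)^2-u_0(1)^2$ for the equally uncontrolled $u_1(-1)^2-u_0(-1)^2$ \emph{and} a coefficient $\alpha^2+\lambda^{ax}_1-a$ of unknown sign, so it is strictly worse. The $(\alpha+1)^2$ structure you notice in \eqref{eq:g is xx} is real, but it is what the paper exploits in the proof of Theorem \ref{constant vs linear} itself (where $1+2\alpha\ge 0$, i.e.\ $\alpha\ge-\tfrac12$, enters); it does not substitute for the missing boundary comparison here. Finally, your expectation that $\alpha\ge-1$ is what controls the first summand is off: that hypothesis plays no visible role in the paper's proof of this lemma (Lemma \ref{lemma 5.3} is proved for every $\alpha$); the missing ingredient is \eqref{two zeroes}, a statement already available in the paper that you could simply have cited.
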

\begin{proof}
We apply \eqref{eq:g is 1} 
to
 $u_1$  and $u_0$ in turn:
\begin{equation}  \label{g1 for u1}
{u_1}(1)^2\left( \alpha^2 +\lambda_1 -a\right) -u_1(-1)^2 \left(\alpha^2+\lambda_1+a\right)
= -a
\end{equation}
\begin{equation}  \label{g1 for u0}
{u_0}(1)^2\left( \alpha^2 +\lambda_0 -a\right) -u_0(-1)^2 \left(\alpha^2+\lambda_0+a\right)
= -a.  
\end{equation}
Adding  $2a{u_1}(1)^2$  to both sides of \eqref{g1 for u1} allows it to be rearranged as 
\begin{align*}
\left( \alpha^2 +\lambda_1 + a\right)\left({u_1}(1)^2 -u_1(-1)^2 \right)
&= -a + 2a{u_1}(1)^2\\
&> -a + 2a{u_0}(1)^2
\end{align*}
where in the last line we have used that $u_1(1)^2-u_0^2(1)> 0$, as in \eqref{two zeroes}.   
Similarly we can use \eqref{g1 for u0} to find 
\begin{align*}
-a + 2a{u_0}(1)^2 &= \left( \alpha^2 +\lambda_0 + a\right)\left({u_0}(1)^2 -u_0(-1)^2 \right)\\
&> \left( \alpha^2 +\lambda_1 + a\right)\left({u_0}(1)^2 -u_0(-1)^2 \right)
\end{align*}
where we have used that ${u_0}(1)^2 -u_0(-1)^2<0$,
by Lemma \ref{lemma 5.4}, and $\lambda_0<\lambda_1$.

Combining both these calculations we have
\begin{displaymath}
\left( \alpha^2 +\lambda_1 + a\right)\left({u_1}(1)^2 -u_1(-1)^2 \right)
> \left( \alpha^2 +\lambda_1 + a\right)\left({u_0}(1)^2 -u_0(-1)^2 \right),
\end{displaymath}
and since  $\left( \alpha^2 +\lambda_1 + a\right)>0$
by Lemma \ref{lemma 5.3}, the result follows. 
\end{proof}

In order to prove Theorem \ref{constant vs linear}, we need to show
that the gap for linear potentials $ax$ achieves its minimum for some
finite $a$; later, we'll show that in fact this occurs at $a=0$.

\begin{lemma} \label{coercivity} 
  Let $\Gamma_{\!2}({ax})$ be the fundamental
  gap for the Schr\"odinger operator $-\Delta+ax$, with Robin boundary
  conditions~\eqref{main boundary}.  Then $\Gamma_{\!2}({ax})\to \infty $ as
  $|a|\to \infty$.
\end{lemma}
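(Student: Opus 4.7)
The plan is to reduce to the case $a\to+\infty$ via the reflection symmetry $x\mapsto -x$, which converts $ax$ into $-ax$ while preserving the Robin boundary conditions and the spectrum, and then to show that the first two eigenvalues separate on the semiclassical scale $a^{2/3}$. Heuristically, the potential $ax$ has a unique non-degenerate minimum at $x=-1$, so both $u_0$ and $u_1$ concentrate there on the scale $a^{-1/3}$, and one expects an expansion $\lambda_i^{ax}=-a+\tilde\nu_i\,a^{2/3}+o(a^{2/3})$ in which $\tilde\nu_0<\tilde\nu_1$ are the first two eigenvalues of an Airy operator on the half-line.

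For the upper bound I would test the Rayleigh quotient against the localised trial function $\phi_a(x)=\eta(a^{1/3}(x+1))$ for a fixed $\eta\in C_c^\infty([0,\infty))$ with $\eta(0)=1$. The substitution $y=a^{1/3}(x+1)$ gives $\int \phi_a^2\dx=a^{-1/3}\int_0^\infty\eta^2$, $\int(\phi_a')^2\dx=a^{1/3}\int_0^\infty(\eta')^2$ and $\int ax\,\phi_a^2\dx=-a^{2/3}\int_0^\infty\eta^2+O(a^{1/3})$, while the boundary contribution at $x=1$ vanishes once $a$ is large. Dividing yields
\begin{equation*}
  \lambda_0^{ax}\le -a+C_1\,a^{2/3}
\end{equation*}
for an explicit constant $C_1=C_1(\eta)$.

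For the matching lower bound on $\lambda_1^{ax}$, I would apply the same rescaling $y=a^{1/3}(x+1)$, $\tilde u(y)=u(x)$, to the full eigenvalue problem, turning it into the Airy-type equation $-\tilde u''+y\tilde u=\tilde\nu\,\tilde u$ on $(0,2a^{1/3})$, with $\tilde\nu=(\lambda^{ax}+a)/a^{2/3}$ and rescaled Robin parameter $\alpha a^{-1/3}\to 0$ at both endpoints. The formal $a\to\infty$ limit is the Airy half-line problem $-\phi''+y\phi=\tilde\nu\phi$ on $(0,\infty)$ with $\phi'(0)=0$ and $\phi\in L^2(0,\infty)$, whose simple eigenvalues $\tilde\nu_k^\infty=-a_k'$ (with $a_k'$ the $k$-th negative zero of $\Ai'$) are strictly separated, $\tilde\nu_1^\infty-\tilde\nu_0^\infty=a_0'-a_1'>0$. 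Spectral convergence $\tilde\nu_k^{\,a}\to\tilde\nu_k^\infty$ for $k=0,1$ follows from trial-function upper bounds (truncated Airy eigenfunctions, whose super-exponential decay absorbs the truncation error) together with a Persson/Agmon-type lower bound that rules out low-energy mass near the far endpoint $y=2a^{1/3}$. Hence for $a$ large, $\tilde\nu_1^{\,a}-\tilde\nu_0^{\,a}\ge\tfrac12(\tilde\nu_1^\infty-\tilde\nu_0^\infty)$, and so $\Gamma_{\!2}(ax)=a^{2/3}(\tilde\nu_1^{\,a}-\tilde\nu_0^{\,a})\to\infty$.

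The main obstacle is making this spectral convergence on the expanding domain fully rigorous. A more elementary alternative, avoiding Airy functions entirely, is a nodal-domain analysis: the unique zero $x_0\in(-1,1)$ of $u_1$ makes $\lambda_1^{ax}$ the first eigenvalue of $-\Delta+ax$ on each of $(-1,x_0)$ and $(x_0,1)$ under mixed Robin/Dirichlet conditions, and combining the pointwise lower bounds $ax\ge ax_0$ on $(x_0,1)$ and $ax\ge -a$ on $(-1,x_0)$ with the explicit first eigenvalue of $-\Delta$ under these conditions, together with a case analysis on the position of $x_0$ relative to $-1+a^{-1/3}$, forces $\lambda_1^{ax}-\lambda_0^{ax}\to\infty$; the balancing between \emph{$x_0$ bounded away from $-1$} (giving large $ax_0$) and \emph{$x_0$ very close to $-1$} (giving a short left subinterval with large first Dirichlet eigenvalue) occurs exactly at the Airy scale $a^{-1/3}$.
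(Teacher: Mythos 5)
Your proposal follows essentially the same route as the paper's proof: reduce to $a\to+\infty$, rescale by $s=a^{1/3}(x+1)$ so that the eigenvalue problem converges to the Airy half-line problem with Neumann condition at the origin, and conclude that $\Gamma_{\!2}(ax)=a^{2/3}(\mu_1-\mu_0)+o(a^{2/3})\to\infty$. If anything you are more careful than the paper, which simply asserts the spectral convergence $\hat{\lambda}^a_i\to\mu_i$ on the expanding domain that you propose to justify via trial functions and Agmon-type estimates.
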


\begin{proof}
  We only consider the case $a> 0$ since the case $a<0$ proceeds
  similarly. Let $(u_i,\lambda_i^a)$ be an eigenpair for the given
  operator. We rescale the domain using $s=(x+1)a^{-1/3}$, so that
  $w_i(s):=u_i(-1+a^{-1/3}s)$ solves
  \begin{displaymath} 
    \begin{cases}
      -w_i''+sw_i=\hat{\lambda}^a_i w_i &\text{ on } (0,2a^{1/3})\\
      w_i'(0)-\alpha a^{-1/3} w_i(0) &=0, \quad w_i'(2a^{1/3})+\alpha
      a^{-1/3} w_i(2a^{1/3})=0,
    \end{cases}
\end{displaymath}
where
\begin{equation}
  \label{eq:3}
  \hat{\lambda}^a_i=a^{1/3}+a^{-2/3}\lambda_i^a.
\end{equation}
As $a\to \infty$, the eigenpair $(w_i,\hat{\lambda}^a_i)$ approaches
the pair $(v_i,\mu_i)$: in the case that $\alpha\in \R$, 
$v_{i}$ is the $L^{2}(0,+\infty)$-solution to 
\begin{gather} 
  -v_i''+sv_i=\mu_i v_i  \text{ on }(0,\infty) \label{rescaled equation}   
\end{gather}
with $\mu_{i}=\mu_{i}^{N}>0$ and Neumann boundary condition $ v_i'(0)=0$; or if $\alpha=\infty$, $v_{i}$ is the solution to \eqref{rescaled
  equation} with
$\mu_{i}=\mu_{i}^{D}>0$ and Dirichlet boundary condition $v_i(0)=0$.

In both cases the function $v_{i}$ and the eigenvalues
$\mu_{i}^{N}$ and $\mu_{i}^D$ are explicitly known: in the first case,
$v_i(s)=\Ai(s-\mu^N_{i})$, where $\Ai$ is the \emph{Airy function}, the
bounded solution to the ODE $y''(x)=x y(x)$, and the shift $\mu_i^N$
is such that $\Ai'(-\mu_i^N)=0$.  For $i=0$,  $\mu_0^N$ is such
that $v_0>0$ on $(0,\infty)$, and for $i=1$,  $\mu_1^N$ is such
that $v_1$ changes sign once on $(0,\infty)$.  The second case is
similar with $v_i(s)=\Ai(s-\mu^D_{i})$, where $\Ai(-\mu_i^D)=0$.

In either case,  $\hat{\lambda}^a_i \to \mu_i$ as
$a\to+\infty$ and so, by~\eqref{eq:3}, 
\begin{displaymath}
\lambda_i^a= a^{2/3}\mu_i-a + \mathcal{O}(a^{2/3})\qquad\text{as $a\to
  +\infty$.}
\end{displaymath}
Applying this expansion of $\lambda_i^a$ to the fundamental gap
$\Gamma_{\!2}({ax})$ yields
\begin{displaymath}
  \Gamma_{\!2}({ax})=a^{2/3}(\mu_2-\mu_{1})+ \mathcal{O}(a^{2/3})\qquad\text{as $a\to
  +\infty$.}
\end{displaymath}
As $\mu_2-\mu_1>0$, we find $\Gamma_{\!2}({ax})\to \infty$
as $a\to \infty$. 
\end{proof}

\begin{proof}[Proof of Theorem \ref{constant vs linear}]

 Due to  Lemma \ref{coercivity}, the gap for linear potentials $\Gamma({ax})$ achieves a minimum at some $a\in\R$.  
  Define a family of potentials  $V^{t}:=  t a x$.    Using \eqref{gap dot}, \[ \frac{d}{d t}(\lambda^{V^t}_1-\lambda^{V^{t}}_0)= a\int x(u_1^2-u_0^2)\dx,\]
where here and in the remainder of this section $u_0$ and $u_1$ are 
eigenfunctions for the problem \eqref{main}--\eqref{main boundary} with linear potential $V^{t}= ta x$.

Suppose, in order to obtain a contradiction, that the critical point of the gap occurs at some $a>0$.   This implies that 
\begin{equation}
\int x(u_1^2-u_0^2) \dx=0.\label{assumption} \end{equation}

Next, under Robin or Neumann boundary conditions, identity \eqref{eq:g is 1} becomes
\begin{align}  
(\lambda-a+\alpha^2)u(1)^2-(\lambda+a+\alpha^2)u(-1)^2
 &= -a;   \label{eq:g is 1 2}
\end{align}
and identity \eqref{eq:g is xx} becomes
\begin{align*}
u(1)^2\left[ \alpha^2+ \lambda -a+1+2\alpha\right] & -u(-1)^2\left[ \alpha^2 + \lambda + a +1+2\alpha\right] \\&\qquad=4\lambda\int_{-1}^1 x u^2\dx -5a\int_{-1}^1 x^2 u^2\dx,\end{align*}
subtracting  \eqref{eq:g is 1 2} from this results in 
\[
u(1)^2\left[1+2\alpha\right] -u(-1)^2\left[ 1+2\alpha\right] -a =4\lambda\int_{-1}^1 x u^2\dx  -5a\int_{-1}^1 x^2 u^2\dx.\]
Applying this to both $u_0$ and $u_1$, and subtracting, we find
\begin{align}    \label{equation 10}
 \left[1+2\alpha\right] & \left[u_1(1)^2-   u_0(1)^2- u_1(-1)^2+u_0(-1)^2\right]  \\
&=4(\lambda_1-\lambda_0) \int_{-1}^1 x (u_1^2-u_0^2)\dx  -5a\int_{-1}^1 x^2 (u_1^2-u_0^2)\dx.\notag
\end{align}

The assumption $\alpha\ge -\frac12$, and Lemma \ref{boundary terms}, imply that the left hand side is non-negative.
However, we claim that the right hand side is strictly negative.    

The first term of the right hand side is zero, by our assumption \eqref{assumption}.
The final term of \eqref{equation 10}  can be estimated by the same trick we used in Theorem \ref{linear better than convex p}:   let $cx+b$ be the line that intersects $x^2$ at $\xi_{-}$ and $\xi_+$, which are the points where $u_1^2-u_0^2$ changes sign, as in  \eqref{u1-u0 p}.    Then $(x^2-cx-b)(u_1^2-u_0^2)$ 
is 
strictly positive for all $x\not= \xi_{\pm}$.
  Furthermore, $\int u_1^2=\int u_0^2$ and our assumption \eqref{assumption} is that $\int x(u_1^2-u_0^2)=0$, hence
\begin{equation}  \notag -5a \int_{-1}^1 x^2 (u_1^2-u_0^2)\dx=-5a \int_{-1}^1 (x^2-cx-b) (u_1^2-u_0^2)\dx <0.\end{equation}
With these two observations,  \eqref{equation 10} becomes
\begin{equation*}    \label{equation 20}
 \left[1+2\alpha\right]  \left[u_1(1)^2-   u_0(1)^2- u_1(-1)^2+u_0(-1)^2\right]
<0,
\end{equation*}
as claimed.  The contradiction implies our original assumption \eqref{assumption} must be false, and thus the minimum of the gap is not attained for any  potential $ax$ with $a\not=0$.   
 Thus the zero potential, with $a=0$, minimises the gap over all linear potentials.   \end{proof}


\def\cprime{$'$}

 \end{document}